\title{The $4 \times 4$ minors of a $5 \times 5$ symmetric matrix are a tropical basis}
\author{Dylan Zwick \\ \href{mailto:zwick@math.utah.edu}{zwick@math.utah.edu}}
\date{}
\begin{document}

\maketitle

\begin{abstract} 
  This paper proves the $4 \times 4$ minors of a $5 \times 5$ symmetric matrix of indeterminates are a tropical basis.
\end{abstract}

The $r \times r$ minors of an $m \times n$ matrix of variables are a tropical basis if $r = 2, 3$, or $min(m,n)$. They are \emph{not} form a tropical basis if $4 < r < min(m,n)$. The $r = 4$ case is special. The $4 \times 4$ minors of an $m \times n$ matrix of variables are a tropical basis if $min(m,n) \leq 6$, but otherwise not.

The $r = 4$ case is exceptional for symmetric matrices as well. In this paper we prove the $4 \times 4$ minors of a symmetric $5 \times 5$ matrix of variables form a tropical basis, and develop a method that might generalize to larger symmetric matrices. The paper begins with a review of the basic concepts from tropical geometry that we will use. It then introduces a technique called "the method of joints", and uses it to prove the $4 \times 4$ minors of a symmetric $5 \times 5$ matrix of indeterminates form a tropical basis. The paper concludes with an explanation for why the $4 \times 4$ minors of an $n \times n$ symmetric matrix of indeterminates do not form a tropical basis for $n > 12$, and why the author conjectures they do when $n \leq 12$.

A note on notation. When denoting the element in row $i$ and column $j$ of a matrix these indices will be separated by a comma, so for example $A_{i,j}$ is element $(i,j)$ of the matrix $A$. The notation $A_{ij}$ refers to the submatrix formed from $A$ be removing row $i$ and column $j$. Unless stated otherwise, the columns and rows of a submatrix inherit their indices from the larger matrix. So, if $A$ is a $5 \times 5$ matrix the principal submatrix $A_{33}$ has columns and rows labeled sequentially $1,2,4,5$.

The author would like to thank the mathematics department of the University of Utah for support during the research for this paper, and in particular his advisor Aaron Bertram.

\section{Tropical preliminaries}

This section introduces the basic ideas from tropical geometry used in this paper, and reviews relevant results from general and symmetric matrices.

\subsection{Tropical basics}

The \emph{tropical semiring} $(\mathbb{R}, \oplus, \odot)$, is defined as the semiring with arithmetic operations:
\begin{center}
  $a \oplus b := min(a,b)$ \hspace{.1 in} and \hspace{.1 in} $a \odot b := a + b$.
\end{center}

A \emph{tropical monomial} $X_{1}^{a_{1}} \cdots X_{m}^{a_{m}}$ is a symbol, and represents a function equivalent to the linear form $\sum_{i} a_{i}X_{i}$ (standard addition and multiplication). 

A \emph{tropical polynomial} is a tropical sum of tropical monomials  
  \begin{center}
    $F(X_{1},\ldots,X_{m}) := \bigoplus_{a \in \mathcal{A}}C_{a}X_{1}^{a_{1}}X_{2}^{a_{2}} \cdots X_{m}^{a_{m}}$, \hspace{.1 in} with $\mathcal{A} \subset \mathbb{N}^{m}$, $C_{a} \in \mathbb{R}$
  \end{center}
  (tropical addition and multiplication), and represents a piecewise linear convex function $F: \mathbb{R}^{m} \rightarrow \mathbb{R}$. 

In this paper, tropical polynomials will be represented with upper case letters, while standard polynomials will be lower case.

The \emph{tropical hypersurface} $\textbf{V}(F)$ defined by a tropical polynomial $F$ is the locus of points $P \in \mathbb{R}^{m}$ such that at least two monomials in $F$ are minimal at $P$. This is also called the \emph{double-min locus} of $F$.

For example, the tropical hypersurface defined by the tropical polynomial
\begin{center}
  $X \oplus Y \oplus 0  = min\{x,y,0\}$
\end{center}
would include the point $(1,0)$, as both $Y$ and $0$ are minimal at that point, but would not include the point $(-1,0)$, as $X$ is uniquely minimal at that point. This is an example of a tropical line.

\vspace{.1 in}
\begin{tabular}{c}
  \centering
  \hspace{1in}\includegraphics[scale=1]{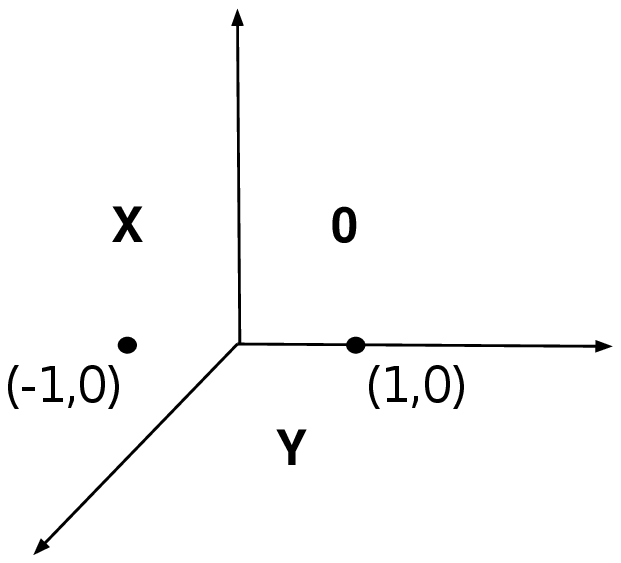}
\end{tabular}

\subsection{Tropical bases}

Let $k$ be an algebraically closed field. Let $f \in k[x_{1},\ldots,x_{m}]$ be a polynomial. The locus of points $p \in k^{m}$ such that $f(p) = 0$ is a \emph{hypersurface}, and is denoted $\textbf{V}(f)$. Let $I$ be an ideal of $k[x_{1},\ldots,x_{m}]$. The ideal $I$ defines a \emph{algebraic variety}, (or \emph{variety}, for short) $\textbf{V}(I)$, in $k^{m}$, which is the set of points $p \in k^{m}$ such that $f(p) = 0$ for all $f \in I$. If $I = (f_{1},\ldots,f_{n})$  then the set $\{f_{1},\ldots,f_{n}\}$ is a \emph{basis} for $I$, and $\textbf{V}(I)$ is equal to the locus of points $p \in k^{m}$ such that $f_{i}(p) = 0$ for all $f_{i}$ in the basis. Put succinctly
\begin{center}
  $\textbf{V}(I) = \bigcap \textbf{V}(f_{i})$.
\end{center}
So, a variety is an intersection of hypersurfaces. By the Hilbert basis theorem every ideal of $k[x_{1},\ldots,x_{m}]$ is finitely generated, so any variety is a finite intersection of hypersurfaces.

In the tropical setting there is an analog of a hypersurface, and we would like an analog of a variety. It might seem natural to define a tropical variety as the intersection of a finite set of tropical hypersurfaces, but these sets do not always have the properties we need in order for them to be useful analogs of algebraic varieties, and we instead call these sets tropical prevarieties. 

A \emph{tropical prevariety} $\textbf{V}(F_{1},\ldots,F_{n})$ is a finite intersection of tropical hypersurfaces:  
\begin{center}
  $\textbf{V}(F_{1},\ldots,F_{n}) = \bigcap_{i = 1}^{n} \textbf{V}(F_{i})$.
\end{center}

A tropical variety is defined differently. Let $K = \mathbb{C}\{\{t\}\}$ be the set of formal power series $a = c_{1}t^{a_{1}} + c_{2}t^{a_{2}} + \cdots$, where $a_{1} < a_{2} < a_{3} < \cdots$ are rational numbers that have a common denominator. These are called Puiseux series, and this set is an algebraically closed field of characteristic zero. For any nonzero element $a \in K$ define the degree of $a$ to be the value of the leading exponent $a_{1}$. This gives us a degree map $deg : K^{*} \rightarrow \mathbb{Q}$. For any two elements $a,b \in K^{*}$ we have
\begin{center}
  $deg(ab) = deg(a) + deg(b) = deg(a) \odot deg(b)$.
\end{center}
Generally, we also have
\begin{center}
  $deg(a + b) = min(deg(a),deg(b)) = deg(a) \oplus deg(b)$.
\end{center}
The only case when this addition relation is not true is when $a$ and $b$ have the same degree, and the coefficients of the leading terms cancel.

We would like to do tropical arithmetic over $\mathbb{R}$, and not just over $\mathbb{Q}$, so we enlarge the field of Puisieux series to allow this. Define the set $\tilde{K}$ by
\begin{center}
  $\tilde{K} = \left\{\sum_{\alpha \in A} c_{\alpha}t^{\alpha} | A \subset \mathbb{R} \text{ well-ordered}, c_{\alpha} \in \mathbb{C}\right\}$.
\end{center}
This is the set of Hahn series, and it is an algebraically closed field of characteristic zero containing the Puisieux series. We define a tropical variety in terms of a variety over $\tilde{K}$.

The degree map on $(\tilde{K}^{*})^{m}$ is the map $\mathcal{T}$ taking points $(p_{1},\ldots,p_{m}) \in (\tilde{K}^{*})^{m}$ to points $(deg(p_{1}),deg(p_{2}),\ldots,deg(p_{m})) \in \mathbb{R}^{m}$. A tropical variety is the image of a variety in $(\tilde{K}^{*})^{m}$ under the degree map. We call this image the \emph{tropicalization} of a set of points in $(\tilde{K}^{*})^{m}$. The tropicalization of a polynomial $f \in \tilde{K}[x_{1},\ldots,x_{m}]$ is the tropical polynomial $\mathcal{T}(f)$ formed by tropicalizing the coefficients of $f$, and converting addition and multiplication into their tropical counterparts. For example, the tropicalization of the polynomial 
\begin{center}
  $f = 3t^{2}xy - 7tx^{3}$ 
\end{center}
is the tropical polynomial
\begin{center}
  $\mathcal{T}(f) = 2XY \oplus 1X^{3}$.
\end{center}

In an unpublished manuscript, Mikhail Kapranov proved the following useful and fundamental result.

\newtheorem{thm}{Theorem}
\begin{thm}[Kapranov's Theorem]
  For $f \in \tilde{K}[x_{1},\ldots,x_{m}]$ the tropical variety $\mathcal{T}(\textbf{V}(f))$ is equal to the tropical hypersurface $\textbf{V}(\mathcal{T}(f))$ determined by the tropical polynomial $\mathcal{T}(f)$.
\end{thm}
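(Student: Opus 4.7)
The proof splits into the two inclusions. The easy direction, $\mathcal{T}(\mathbf{V}(f)) \subseteq \mathbf{V}(\mathcal{T}(f))$, follows almost immediately from the degree identities stated in the preliminaries. Given $p \in \mathbf{V}(f) \subset (\tilde{K}^*)^m$ and $f = \sum_a c_a x^a$, the relation $\sum_a c_a p^a = 0$ can hold only if the minimum of $\deg(c_a) + \sum_i a_i \deg(p_i)$ is attained by at least two indices $a$; otherwise the term of lowest degree would have no partner to cancel against. Since this minimum is exactly the value of $\mathcal{T}(f)$ at $\mathcal{T}(p)$, we conclude that $\mathcal{T}(p) \in \mathbf{V}(\mathcal{T}(f))$.

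For the reverse inclusion, fix $w \in \mathbf{V}(\mathcal{T}(f))$ and let $d = \mathcal{T}(f)(w)$. The plan is to produce a point $p \in \mathbf{V}(f)$ with $\deg(p_i) = w_i$ by a change of variables followed by a lifting argument. Substitute $x_i = t^{w_i} y_i$ and set $h(y) := t^{-d} f(t^w y)$. Each coefficient of $h$ has nonnegative degree, and the \emph{initial form} $\bar{h}(y) \in \mathbb{C}[y_1^{\pm 1}, \ldots, y_m^{\pm 1}]$ obtained by reading off the $t^0$-term of each coefficient is a nonzero Laurent polynomial containing at least two distinct monomials, precisely because two monomials of $\mathcal{T}(f)$ tie for the minimum at $w$.

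The next step is to find $\zeta \in (\mathbb{C}^*)^m$ with $\bar{h}(\zeta) = 0$. I would argue by induction on $m$. For $m = 1$, factor out the lowest power of $y_1$ so that what remains is an honest polynomial with a nonzero constant term and at least two monomials, then invoke the fundamental theorem of algebra to find a nonzero root. For $m > 1$, either some variable $y_i$ appears with two distinct exponents, in which case fixing the remaining coordinates generically in $\mathbb{C}^*$ yields a univariate Laurent polynomial with at least two monomials and the base case applies, or else $\bar{h}$ factors as a pure monomial times a Laurent polynomial in fewer variables, to which the inductive hypothesis applies.

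Finally, one must lift $\zeta$ to a zero of $h$ in $(\tilde{K}^*)^m$ all of whose coordinates have degree exactly $0$; this is the delicate step and will be the main obstacle. The key idea is to choose $\zeta$ to be a smooth point of the hypersurface $\{\bar{h} = 0\} \cap (\mathbb{C}^*)^m$, which is possible because the singular locus is a proper subvariety of an already nonempty one. At such a $\zeta$ some partial derivative $\partial \bar{h}/\partial y_i(\zeta)$ is nonzero, so fixing $y_j = \zeta_j$ for $j \neq i$ and applying either Hensel's lemma or the algebraic closure of $\tilde{K}$ together with the nonvanishing derivative produces a root $y_i^* \in \tilde{K}^*$ whose leading coefficient is $\zeta_i$, hence $\deg(y_i^*) = 0$. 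Setting $p_i = t^{w_i} y_i^*$ (with $y_j^* = \zeta_j$ for $j \neq i$) gives a point of $\mathbf{V}(f)$ with $\mathcal{T}(p) = w$, completing the proof.
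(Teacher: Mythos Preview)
The paper does not actually prove Kapranov's theorem; it is quoted as a known result attributed to an unpublished manuscript of Kapranov (a proof appears in standard references such as \cite{ms}). So there is no proof in the paper to compare against, and your outline should be judged on its own.

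The easy inclusion is fine, and the overall strategy for the hard inclusion is the standard one. There is, however, a genuine gap in the lifting step. You claim one can always choose $\zeta$ to be a \emph{smooth} point of $\{\bar h=0\}\cap(\mathbb{C}^*)^m$ because ``the singular locus is a proper subvariety.'' That statement is true for \emph{reduced} hypersurfaces, but $\bar h$ need not be squarefree. For example, $f=(x_1-1)^2+tx_2$ at $w=(0,0)$ has initial form $\bar h=(y_1-1)^2$; every partial of $\bar h$ vanishes on its zero locus, so no smooth point exists and neither Hensel's lemma nor the ``nonvanishing derivative'' argument applies.

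The repair is to drop the simplicity requirement and use the Newton polygon instead. Since $\bar h$ has at least two monomials, some variable $y_i$ occurs with two distinct exponents; choose $\zeta_j\in\mathbb{C}^*$ for $j\neq i$ generically so that the univariate Laurent polynomial $\bar h(\zeta_1,\dots,y_i,\dots,\zeta_m)$ still has at least two nonzero terms. Writing $h(\zeta_1,\dots,y_i,\dots,\zeta_m)=\sum_k H_k\,y_i^{\,k}$, each $H_k$ has $\deg(H_k)\ge 0$, and at least two of them have $\deg(H_k)=0$. Hence the Newton polygon of this univariate polynomial has a horizontal edge at height~$0$, so by the algebraic closedness of $\tilde K$ there is a root $y_i^*\in\tilde K^*$ with $\deg(y_i^*)=0$ --- no simplicity needed. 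Setting $p_i=t^{w_i}y_i^*$ and $p_j=t^{w_j}\zeta_j$ for $j\neq i$ gives $f(p)=0$ with $\mathcal{T}(p)=w$. With this modification your argument is correct and coincides with the standard proof. (Your inductive case split for finding $\zeta$ is also unnecessary: once $\bar h$ has two monomials, some variable automatically appears with two exponents, so one can always pass directly to the univariate situation.)
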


Given Kapranov's theorem if $I = (f_{1},\ldots,f_{n})$, then obviously the tropical prevariety determined by the set of tropical polynomials $\{\mathcal{T}(f_{1}),\ldots,\mathcal{T}(f_{n})\}$ contains the tropical variety determined by $I$:
\begin{center}
  $\mathcal{T}(\textbf{V}(I)) \subseteq \bigcap_{i = 1}^{n} \textbf{V}(\mathcal{T}(f_{i}))$.
\end{center}

While Kapranov's theorem gives us the two sets are equal if $n = 1$, in general the containment may be strict. For example, the lines in $(\tilde{K}^{*})^{2}$ defined by the linear equations
\begin{center}
  $f = 2x + y + 1$, \hspace{.1 in} and \hspace{.1 in} $g = tx + ty + 1$,
\end{center}
intersect at the point $(t^{-1}-1,-2t^{-1}+1)$. The tropicalization of this point is $(-1,-1)$, and so if $I = (f,g)$ then
\begin{center}
  $\mathcal{T}(\textbf{V}(I)) = (-1,-1)$.
\end{center}
However, is we tropicalize the linear equations we get:
\begin{center}
  $\mathcal{T}(f) = X \oplus Y \oplus 0$, \hspace{.1 in} and \hspace{.1 in} $\mathcal{T}(g) = 1X \oplus 1Y \oplus 0$.
\end{center}
Each of $\textbf{V}(\mathcal{T}(f))$ and $\textbf{V}(\mathcal{T}(g))$ is a tropical line, and their intersection is the tropical prevariety consisting of all points $(a,a)$ with $a \leq -1$.

\vspace{.1 in}
\begin{tabular}{c}
  \centering
  \hspace{1in}\includegraphics[scale=1]{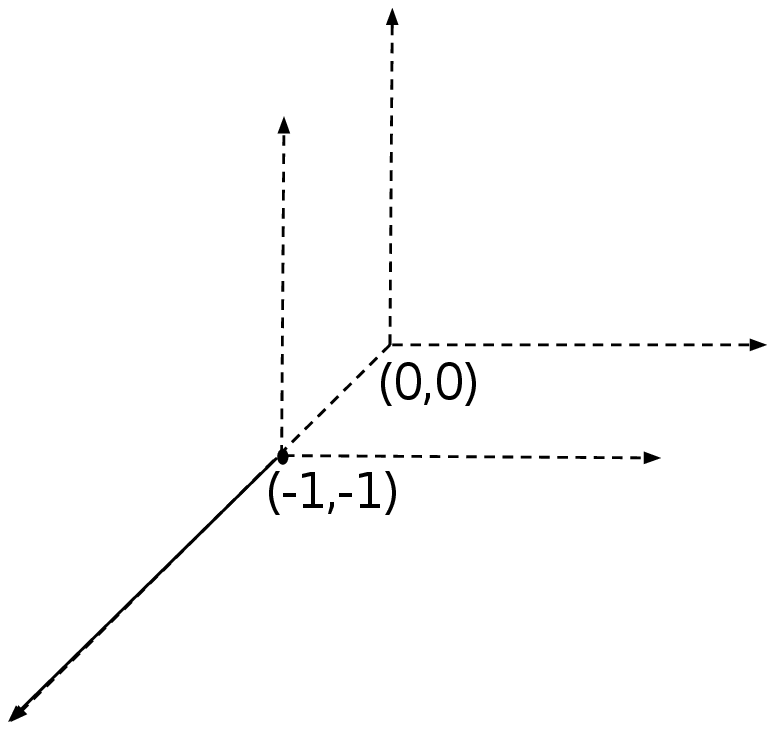}
\end{tabular}

This tropical prevariety properly contains the tropical variety $(-1,-1)$, but the prevariety is clearly much larger. That the intersection of two distinct tropical lines is not necessarily a point is a motivating example of why we do not define a tropical variety to be a finite intersection of tropical hypersurfaces.

\subsection{Kapranov and tropical Rank}

In \cite{dss}, Develin, Santos, and Sturmfels define three notions of matrix rank coming from tropical geometry: the Barvinok rank, the Kapranov rank, and the tropical rank. In this paper we focus on the symmetric analogs of the Kapranov and tropical ranks.

The \emph{tropical rank} of an $m \times n$ matrix $A = (A_{i,j}) \in \mathbb{R}^{m \times n}$ is the smallest number $r \leq min(m,n)$ such that $A$ is not in the tropical prevariety formed by the $r \times r$ minors of an $m \times n$ matrix of indeterminates.

A \emph{lift} of the matrix $A$ is a matrix $\tilde{A} = (\tilde{a}_{i,j}) \in (\tilde{K}^{*})^{m \times n}$ such that $deg(\tilde{a}_{i,j}) = A_{i,j}$ for all $i,j$. The \emph{Kapranov rank} of a matrix is the smallest rank of any lift of the matrix. Equivalently, the Kapranov rank is the smallest number $r \leq min(m,n)$ such that $A$ is not in the tropical variety formed by the $r \times r$ minors of an $m \times n$ matrix of indeterminates.

A square matrix $A = (A_{i,j}) \in \mathbb{R}^{n \times n}$ is \emph{tropically singular} if the minimum
  \begin{center}
    $tropdet(A) := \bigoplus_{\sigma \in S_{n}} A_{1,\sigma(1)} \odot A_{2,\sigma(2)} \odot \cdots \odot A_{n,\sigma(n)}$
  \end{center}
  is attained at least twice is the tropical sum. Here $S_{n}$ denotes the symmetric group on $\{1,2,\ldots,n\}$. We call the number $tropdet(A)$ defined above the \emph{tropical determinant} of $A$, and we say any permutation $\sigma$ such that
  \begin{center}
    $tropdet(A) = A_{1,\sigma(1)} \odot A_{2,\sigma(2)} \odot \cdots \odot A_{n,\sigma(n)}$
  \end{center}
  \emph{realizes} the tropical determinant. So, equivalently, a square matrix $A$ is tropically singular if more than one permutation realizes the tropical determinant.

More generally, suppose $A$ is an $m \times n$ real matrix, and $\{i_{1},i_{2},\ldots,i_{r}\}$ and $\{j_{1},j_{2},\ldots,j_{r}\}$ are subsets of $\{1,\ldots,m\}$ and $\{1,\ldots,n\}$, respectively. These subsets define an $r \times r$ submatrix $A'$ of $A$, with row indices $\{i_{1},\ldots,i_{r}\}$ and column indices $\{j_{1},\ldots,j_{r}\}$. A tropical monomial of the form
\begin{center}
  $\bigodot_{k = 1}^{r}X_{i_{k},\rho(i_{k})}$,
\end{center}
where $\rho$ is a bijection from the row indices to the column indices, is a \emph{minimizing monomial} for the submatrix $A'$ if, over all monomials defined by bijections from $\{i_{1},i_{2},\ldots,i_{r}\}$ to $\{j_{1},j_{2},\ldots,j_{r}\}$, this monomial is minimal under the valuation $X_{i,j} \mapsto A_{i,j}$. An $r \times r$ submatrix of $A$ is tropically singular if it has more than one minimizing monomial.

The tropical variety defined by the $r \times r$ minors of an $m \times n$ matrix of indeterminates is contained in the tropical prevariety defined by the same minors, and therefore
\begin{center}
  tropical rank $\leq$ Kapranov rank.
\end{center}

A natural question to ask about Kapranov and tropical rank is when they are necessarily equal. In other words, for what values $r,m,n$ does tropical rank $r$ imply Kapranov rank $r$ for any $m \times n$ matrix.

This question was answered through the combined work of Develin, Santos, and Sturmfels \cite{dss}, Chan, Jensen, and Rubei \cite{cjr}, and Shitov \cite{sh}. The result is named after Shitov \cite{ms}, as he completed the project.

\begin{thm}[Shitov's Theorem]
  The $r \times r$ minors of an $m \times n$ matrix of indeterminates form a tropical basis if and only if:
  \begin{itemize}
    \item $r = min(m,n)$, or
    \item $r \leq 3$, or
    \item $r = 4$ and $min(m,n) \leq 6$.
  \end{itemize}
\end{thm}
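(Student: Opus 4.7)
The plan is to prove the equivalence by splitting into the forward direction (each listed regime yields a tropical basis) and the backward direction (counterexamples exist in every excluded case), and then handling each of the three ``yes'' regimes separately.

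For the forward direction I would argue case by case. When $r = \min(m,n)$ with $m = n$, the condition of tropical singularity is precisely that the single determinant polynomial vanishes tropically, so Kapranov's theorem applied to that one polynomial immediately produces a lift in $\tilde{K}^{n \times n}$ of rank less than $n$; the rectangular subcase reduces to this by restricting to a maximal square submatrix and extending a lift of it. For $r \leq 3$ I would follow the combinatorial approach of Develin--Santos--Sturmfels: a matrix whose every $(r+1) \times (r+1)$ submatrix is tropically singular carries a tropically convex structure on its columns strong enough to permit a column-by-column construction of a Hahn-series lift of rank $r$, with the $r=1,2$ subcases essentially immediate and the $r=3$ case requiring a careful compatibility argument between overlapping $3 \times 3$ blocks. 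For $r = 4$ with $\min(m,n) \leq 6$ I would adapt the arguments of Chan--Jensen--Rubei, reducing to the extremal $\min(m,n) = 6$ case and exploiting the combinatorial rigidity forced on a six-row matrix by requiring that every $4 \times 4$ minor be tropically singular.

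For the backward direction I would, for each excluded triple $(r,m,n)$, exhibit an explicit $m \times n$ matrix whose $r \times r$ submatrices are all tropically singular but which admits no lift of rank $r - 1$ over $\tilde{K}$. Shitov's counterexamples are built by prescribing a sparse or otherwise structured pattern of entries so that within each $r \times r$ submatrix the minimum over permutations is forced to be attained at least twice, while a global algebraic obstruction rules out any consistent lift of the claimed rank. The tropical singularity of every minor reduces to a finite combinatorial check on permutation weights, so this half of each counterexample is largely routine once the pattern is in hand.

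The main obstacle is proving non-liftability. A genuine gap between tropical and Kapranov rank of a specific matrix is equivalent to a system of polynomial equations over $\tilde{K}$ having no solution whose entrywise valuations equal the prescribed tropical data, and extracting such an obstruction typically requires a subtle interplay of valuation arithmetic and the combinatorics of the support pattern, with no soft general principle to appeal to. A secondary difficulty is the absence of a unified framework: each of the three ``yes'' regimes (maximal minors via Kapranov, small $r$ via tropical convexity, and the sporadic $r = 4$, $\min(m,n) \leq 6$ case via polyhedral rigidity) uses a genuinely different technique, so a complete proof of the theorem unavoidably decomposes into several substantial and essentially independent lemmas.
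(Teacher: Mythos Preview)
The paper does not prove this theorem; it is stated as background and attributed to the combined work of Develin--Santos--Sturmfels, Chan--Jensen--Rubei, and Shitov, with citations but no argument. Your proposal is a reasonable high-level sketch of how those sources collectively establish the result, and you correctly identify that the proof decomposes into several essentially independent pieces (maximal minors via Kapranov, small $r$ via the tropical-convexity arguments of \cite{dss}, the sporadic $r=4$ case via \cite{cjr} and \cite{sh}, and explicit counterexamples for the excluded regimes). Since the paper itself offers no proof, there is nothing here to compare your approach against; the theorem is simply quoted from the literature.
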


As with general matrices, for symmetric matrices the $r = 4$ minors are a special boundary case. 

\subsection{Symmetric Kapranov and symmetric tropical rank}

The symmetric Kapranov and symmetric tropcial ranks are defined analogously to their general counterparts.

The \emph{symmetric tropical rank} of an $n \times n$ symmetric matrix $A = (A_{i,j}) \in \mathbb{R}^{n \times n}$ is the smallest number $r \leq n$ such that $A$ is not in the tropical prevariety formed by the $r \times r$ minors of an $n \times n$ symmetric matrix of indeterminates.

The \emph{symmetric Kapranov rank} of an $n \times n$ symmetric matrix $A = (A_{i,j}) \in \mathbb{R}^{n \times n}$ is the smallest rank of any lift to a symmetric matrix. Equivalently, the symmetric Kapranov rank is the smallest number $r \leq n$ such that $A$ is not in the tropical variety formed by the $r \times r$ minors of an $n \times n$ symmetric matrix of indeterminates.

The tropical variety defined by the $r \times r$ minors of an $n \times n$ symmetric matrix of indeterminates is contained in the tropical prevariety defined by the same minors, and therefore
\begin{center}
  symmetric tropical rank $\leq$ symmetric Kapranov rank.
\end{center}

As a lift to an $n \times n$ symmetric matrix is a lift to an $n \times n$ matrix, we must have
\begin{center}
  Kapranov rank $\leq$ symmetric Kapranov rank.
\end{center}

For symmetric matrices, we say a square submatrix is \emph{symmetrically tropically singular} if it has more than one minimizing monomial given the equivalence $X_{i,j} = X_{j,i}$.

The tropical rank of a matrix can be equivalently defined as the smallest value of $r$ such that the matrix has a tropically nonsingular $r \times r$ submatrix, and similarly the symmetric tropical rank of a symmetric matrix can be equivalently defined as the smallest value of $r$ such that the symmetric matrix has a symmetrically tropically nonsingular $r \times r$ submatrix. 

If an $r \times r$ submatrix of a symmetric $n \times n$ matrix has two distinct minimizing monomials, given the equivalence $X_{i,j} = X_{j,i}$, then a fortiori it has two distinct minimizing monomials without that equivalence, and so
\begin{center}
  tropical rank $\leq$ symmetric tropical rank.
\end{center}

For example, the tropical determinant of a $3 \times 3$ matrix of indeterminates 
\begin{center}
  $\left(\begin{array}{ccc} X_{1,1} & X_{1,2} & X_{1,3} \\ X_{2,1} & X_{2,2} & X_{2,3} \\ X_{3,1} & X_{3,2} & X_{3,3} \end{array}\right)$
\end{center}
is
\begin{center}
  $X_{1,1}X_{2,2}X_{3,3} \oplus X_{1,2}X_{2,3}X_{3,1} \oplus X_{1,3}X_{2,1}X_{3,2} \oplus X_{1,1}X_{2,3}X_{3,2} \oplus X_{1,2}X_{2,1}X_{3,3} \oplus X_{1,3}X_{2,2}X_{3,1}$.
\end{center}
For the matrix
\begin{center}
    $\left(\begin{array}{ccc} 1 & 0 & 0 \\ 0 & 1 & 0 \\ 0 & 0 & 1 \end{array}\right)$
  \end{center}
there are two minimizing monomials, $X_{1,2}X_{2,3}X_{3,1}$ and $X_{1,3}X_{2,1}X_{3,2}$, and so the matrix is tropically singular. However, under the equivalence $X_{i,j} = X_{j,i}$ the tropical determinant becomes
\begin{center}
  $X_{1,1}X_{2,2}X_{3,3} \oplus X_{1,2}X_{2,3}X_{1,3} \oplus X_{1,1}X_{2,3}^{2} \oplus X_{1,2}^{2}X_{3,3} \oplus X_{1,3}^{2}X_{2,2}$,
\end{center}
and the monomial $X_{1,2}X_{2,3}X_{1,3}$ is the unique minimizing monomial, so the matrix is not symmetrically tropically singular.

In \cite{z} the author proves the following symmetric partial analog of Shitov's theorem.

\begin{thm}
    The $r \times r$ minors of an $n \times n$ symmetric matrix of indeterminates are a tropical basis if $r = 2$, $r = 3$, or $r = n$. The minors are not a tropical basis if $4 < r < n$, or if $r = 4$ and $n > 12$.
\end{thm}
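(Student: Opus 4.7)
The statement splits into three positive claims ($r=2,3,n$) and two negative claims ($4<r<n$ and $r=4,\,n>12$), and I would handle each by a separate argument.

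For the positive direction, $r=n$ follows immediately from Kapranov's Theorem applied to the single polynomial $\det$: the determinant of an $n\times n$ symmetric matrix of indeterminates is one polynomial in the symmetric coordinate ring, so its tropical hypersurface equals its tropical variety, and the latter is by definition cut out by symmetric lifts. For $r=2$, the tropical vanishing of the symmetric $2\times 2$ minors $X_{i,i}X_{j,j}-X_{i,j}^{2}$ and $X_{i,i}X_{j,k}-X_{i,j}X_{i,k}$ forces $A_{i,j}=\tfrac{1}{2}(A_{i,i}+A_{j,j})$ together with the compatibility $A_{i,i}+A_{j,k}=A_{i,j}+A_{i,k}$, and setting $v_i:=\tfrac{1}{2}A_{i,i}$ gives $A_{i,j}=v_i+v_j$; this is the tropicalization of the symmetric rank-$1$ matrix $uu^{T}$ with $u_i=t^{v_i}$. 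The case $r=3$ is the subtlest positive case: one must show that a symmetrically tropically rank-$2$ matrix admits a \emph{symmetric} Kapranov lift of rank $2$. Shitov's Theorem supplies some (non-symmetric) rank-$2$ lift, and over $\tilde K$ any symmetric rank-$2$ matrix has the form $\alpha\,xx^{T}+\beta\,yy^{T}$, so I would argue that the tropical vanishing of the \emph{symmetric} $3\times 3$ minors (not merely the general ones) provides exactly enough data to choose $\alpha,\beta,x,y$ whose leading exponents match the prescribed entries of $A$.

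For the negative direction, I would build symmetric counterexamples from non-symmetric ones. Given an $m\times m'$ non-symmetric counterexample $C$ for the $r\times r$ minors (which exists whenever $r\geq 5$ and $m,m'\geq r$, or when $r=4$ and $m,m'\geq 7$, by Shitov's Theorem), I would embed $C$ into a symmetric matrix $A$ by placing $C$ in an off-diagonal block and its transpose in the mirrored block, setting the remaining entries to a large constant $M$. The verification has two parts: every $r\times r$ symmetric minor of $A$ is tropically singular (minors living inside the embedded block restrict to tropically singular minors of $C$; minors meeting the outside region involve enough $M$-entries that many permutations tie for the tropical determinant), yet any symmetric Kapranov lift of $A$ of rank $<r$ would restrict to a non-symmetric lift of $C$ of rank $<r$, contradicting the choice of $C$.

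For $r=4$ the smallest non-symmetric counterexample is $7\times 7$, and the critical observation is that the $7\times 7$ block with rows $\{1,\dots,7\}$ and columns $\{7,\dots,13\}$ of a $13\times 13$ symmetric matrix occupies $49$ distinct coordinates (the two index sets share only the element $7$), so any prescribed $7\times 7$ matrix can be embedded there. In a $12\times 12$ symmetric matrix any $7\times 7$ block has row and column index sets whose union has size at most $12$, so they overlap in at least two indices, identifying distinct entries of the block by symmetry and obstructing the free embedding; this is the source of the threshold $n=13$. The main obstacle is the positive case $r=3$: extracting a \emph{symmetric} rank-$2$ Kapranov lift from a symmetrically tropically rank-$2$ matrix is not an immediate consequence of the general Shitov theorem, and I expect it to require a careful case analysis on the combinatorial pattern of the tropical minimizers of the symmetric $3\times 3$ minors of $A$.
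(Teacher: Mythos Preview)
First, note that the paper does not prove this theorem: it is quoted as background from the author's preprint \cite{z}, so there is no in-paper proof to compare against. I can only evaluate your proposal on its own merits and against the one explicit construction the paper does display (the $13\times 13$ symmetric matrix built from the Fano cocircuit matrix).

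Your arguments for $r=n$ and $r=2$ are correct, and you rightly flag $r=3$ as the hard positive case; as written that part is an outline, not a proof, but you acknowledge this.

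The genuine gap is in your negative argument for $4<r<n$. The claim is that the $r\times r$ minors fail to be a tropical basis for \emph{every} $n>r$, in particular for $n=r+1$. Your embedding takes a non-symmetric Shitov counterexample $C$ of shape at least $(r{+}1)\times(r{+}1)$ and places it in an off-diagonal block of a symmetric matrix; with disjoint row and column index sets this yields a symmetric matrix of size at least $2(r{+}1)$, and even allowing maximal overlap as in your $r=4$ discussion you cannot get below roughly $2r$. So your construction produces counterexamples only for large $n$, and you never address $n=r+1,\ldots,2r$. The propagation principle (non-basis for $n$ implies non-basis for $n{+}1$) goes the wrong way to help you here. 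A separate, smaller construction is needed for those values of $n$, and your proposal does not supply one.

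Two smaller points. First, your padding by a large constant $M$ is not the construction the paper exhibits: the paper's $13\times 13$ example pads with zeros, and the verification that every $4\times 4$ symmetric minor is tropically singular relies on the interaction between the zero blocks and the Fano block, not on large entries forcing ties. Your assertion that ``minors meeting the outside region involve enough $M$-entries that many permutations tie'' would need careful checking, particularly for principal minors and for minors straddling the block boundary, where the symmetric identification $X_{i,j}=X_{j,i}$ can collapse distinct permutations into the same monomial. Second, your remark that a $7\times 7$ block in a $13\times 13$ symmetric matrix occupies ``49 distinct coordinates'' is essentially correct (only the single diagonal entry $(7,7)$ is self-identified), but your observation about $n=12$ merely explains why \emph{this} embedding fails there; it is not evidence that no $12\times 12$ counterexample exists, which is indeed left open in the paper.
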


The boundary cases $r = 4$, and $5 \leq n \leq 12$ remained a question in \cite{z}. In this paper, we partially answer that question for $n = 5$. 

\section{The $4 \times 4$ minors of a $5 \times 5$ symmetric matrix are a tropical basis}

Before we begin the major proof for this paper, we'll need some basic facts about modifying symmetric matrices which we will frequently use.

\begin{itemize}
  
    \item If $A$ is a symmetric matrix, and we permute the rows of $A$ by a permutation $\sigma$, and the columns of $A$ by the \emph{same} permutation, then the resulting matrix $A'$ will be symmetric, and $A'$ will have the same symmetric tropical and symmetric Kapranov rank as $A$. We call a permutation of the rows and columns of $A$ by the same permutation a \emph{diagonal permutation}. 
      
    \item If $A$ is a symmetric matrix, and we tropically multiply row $i$ by a constant $c$, and tropically multiply column $i$ by the \emph{same} constant, then the resulting matrix $A'$ will be symmetric, and $A'$ will have the same symmetric tropical and symmetric Kapranov rank as $A$. In fact, both $A$ and $A'$ will have the same minimizivg monomials, and so will any submatrix of $A$ and the corresponding submatrix of $A'$. We call such an operation a \emph{symmetric} scaling of $A$.
      
\end{itemize}

\newtheorem{lemma}{Lemma}
\begin{lemma}
    For a symmetric matrix if there is a permutation that realizes the tropical determinant, and this permutation contains a 4-cycle in its disjoint cycle decomposition, then the tropical determinant is also realized by a product of transpositions. More specifically, if the tropical determinant is realized by a permutation that contains $(k_{1}k_{2}k_{3}k_{4})$ in its disjoint cycle decomposition, then this cycle can be replaced by $(k_{1}k_{2})(k_{3}k_{4})$ or $(k_{1}k_{4})(k_{2}k_{3})$ to form another permutation that realized the tropical determinant. 
\end{lemma}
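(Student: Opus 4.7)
The plan is a direct computation comparing the contributions of the three permutations to $\sum_i A_{i,\sigma(i)}$ and exploiting symmetry together with the minimality of the tropical determinant.

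First I would observe that the three permutations under consideration differ only on the indices $k_1, k_2, k_3, k_4$, so the contributions from all other cycles are identical and can be ignored. Let $\sigma$ contain the $4$-cycle $(k_1 k_2 k_3 k_4)$, and denote by $\sigma_1$ and $\sigma_2$ the permutations obtained from $\sigma$ by replacing this cycle with $(k_1 k_2)(k_3 k_4)$ and $(k_1 k_4)(k_2 k_3)$ respectively. Writing out the relevant contributions:
\begin{align*}
S_0 &= A_{k_1,k_2} + A_{k_2,k_3} + A_{k_3,k_4} + A_{k_4,k_1}, \\
S_1 &= A_{k_1,k_2} + A_{k_2,k_1} + A_{k_3,k_4} + A_{k_4,k_3}, \\
S_2 &= A_{k_1,k_4} + A_{k_4,k_1} + A_{k_2,k_3} + A_{k_3,k_2}.
\end{align*}

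Next I would apply the symmetry $A_{i,j} = A_{j,i}$. This yields $S_1 = 2A_{k_1,k_2} + 2A_{k_3,k_4}$ and $S_2 = 2A_{k_1,k_4} + 2A_{k_2,k_3}$. Adding these, and using $A_{k_4,k_1} = A_{k_1,k_4}$ to rewrite $S_0$, one finds the key identity
\[
S_1 + S_2 \;=\; 2\bigl(A_{k_1,k_2} + A_{k_2,k_3} + A_{k_3,k_4} + A_{k_1,k_4}\bigr) \;=\; 2S_0.
\]

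Finally I would close the argument using minimality. Since $\sigma$ realizes the tropical determinant (the minimum over all permutations), and $\sigma_1, \sigma_2$ agree with $\sigma$ outside the cycle in question, we have $S_1 \geq S_0$ and $S_2 \geq S_0$. Combined with $S_1 + S_2 = 2S_0$, this forces $S_1 = S_2 = S_0$, so both $\sigma_1$ and $\sigma_2$ realize the tropical determinant. Since the $4$-cycle has been replaced by a product of transpositions on the same index set, these are different permutations than $\sigma$, proving the lemma. There is no substantive obstacle here: the only subtlety is being careful with the symmetry identification $A_{k_4,k_1} = A_{k_1,k_4}$, which is exactly what makes the $S_1 + S_2 = 2 S_0$ identity hold and why the statement is specific to symmetric matrices.
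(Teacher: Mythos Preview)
Your proof is correct and follows essentially the same approach as the paper: the paper observes that the tropical product of the two transposition monomials equals the tropical square of the $4$-cycle monomial under the symmetry $X_{i,j}=X_{j,i}$, which is exactly your identity $S_1 + S_2 = 2S_0$ written additively, and then invokes minimality to conclude both must also be minimizing. Your version is slightly more explicit in spelling out the minimality step ($S_1,S_2 \geq S_0$ together with $S_1+S_2=2S_0$ forces equality), but the argument is the same.
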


\begin{proof}
    The product of $X_{k_{1},k_{2}}X_{k_{2},k_{1}}X_{k_{3},k_{4}}X_{k_{4},k_{3}}$ and $X_{k_{1},k_{4}}X_{k_{4},k_{1}}X_{k_{2},k_{3}}X_{k_{3},k_{2}}$ is the square of $X_{k_{1},k_{2}}X_{k_{2},k_{3}}X_{k_{3},k_{4}}X_{k_{4},k_{1}}$, given the equivalence $X_{i,j} = X_{j,i}$, and the only way $X_{k_{1},k_{2}}X_{k_{2},k_{3}}X_{k_{3},k_{4}}X_{k_{4},k_{1}}$ could be a minimizing monomial is if $X_{k_{1},k_{2}}X_{k_{2},k_{1}}X_{k_{3},k_{4}}X_{k_{4},k_{3}}$ and $X_{k_{1},k_{4}}X_{k_{4},k_{1}}X_{k_{2},k_{3}}X_{k_{3},k_{2}}$ are as well. 
\end{proof}

This can be easily generalized to any odd-cycle larger that a transposition, but we will not need that generalization in this paper.

\newtheorem{proposition}{Proposition}
\begin{proposition}
  If $A$ is a $5 \times 5$ symmetric matrix and $\sigma$ is a permutation that realizes the tropical determinant, then there exists a matrix $A'$ such that $A'$ can be obtained from $A$ through a diagonal permutation and a sequence of symmetric scalings, every entry in $A'$ is nonnegative, and $a_{i,\sigma(i)} = 0$ for all $1 \leq i \leq n$.
\end{proposition}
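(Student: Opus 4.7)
The plan is to reformulate the proposition as a linear programming feasibility statement and verify it by a case analysis on the cycle type of $\sigma$. After a diagonal permutation (which conjugates $\sigma$) I may assume $\sigma$ is in a canonical cycle form, and the effect of a symmetric scaling by $(c_1,\ldots,c_5)$ is $A'_{i,j}=A_{i,j}+c_i+c_j$. The proposition then amounts to the existence of $c\in\mathbb{R}^5$ satisfying $c_i+c_{\sigma(i)}=-A_{i,\sigma(i)}$ for all $i$ and $c_i+c_j\geq -A_{i,j}$ for every other pair $(i,j)$.

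Before the case analysis I would invoke Lemma 1 to eliminate 4-cycles. If $\sigma$ contains a 4-cycle $(k_1 k_2 k_3 k_4)$, replacing it by $(k_1 k_2)(k_3 k_4)$ yields another realizing permutation $\sigma'$ whose zero-pattern (as a set of unordered pairs) is a proper subset of $\sigma$'s. Once an $A'$ with all entries nonnegative and zeros on the $\sigma'$-positions has been constructed, the identity $\sum_i A'_{i,\sigma(i)}=tropdet(A')=\sum_i A'_{i,\sigma'(i)}=0$, combined with nonnegativity, forces the remaining two $\sigma$-entries (those corresponding to $\{k_2,k_3\}$ and $\{k_1,k_4\}$) to vanish as well. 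So I may assume $\sigma$ has cycle type among $(1^5)$, $(2,1^3)$, $(2^2,1)$, $(3,1^2)$, $(3,2)$, and $(5)$.

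In each surviving case the equality constraints determine the $c_i$ uniquely on every fixed point, every 3-cycle, and the 5-cycle, while each transposition contributes one degree of freedom. The technique I would use, the method of joints, is that for every inequality $A'_{i,j}\geq 0$ to be verified, one exhibits a comparison permutation $\tau$ such that, after substituting the known or parameterized scaling values, the inequality $\sum_k A_{k,\tau(k)}\geq\sum_k A_{k,\sigma(k)}$ guaranteed by $\sigma$ realizing the tropical determinant reduces either to $A'_{i,j}\geq 0$ directly, or to a bound on a free parameter that combines with bounds from other comparisons to leave a nonempty feasible interval. For example, in the 5-cycle case $\sigma=(1,2,3,4,5)$, the diagonal entry $A'_{1,1}$ follows from $\tau=(1)(2,3)(4,5)$, whose symmetric sum is $A'_{1,1}+2A'_{2,3}+2A'_{4,5}$ and thus equals $A'_{1,1}$ after zeroing the $\sigma$-entries; the chord $A'_{1,3}$ follows from $\tau=(1,2,3)(4,5)$, whose sum collapses to $A'_{1,3}$; and the rotational and reflective symmetries of the pentagon recover the remaining eight inequalities.

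The main obstacle will be the 5-cycle case and, to a lesser extent, the $(3,1^2)$ case, in which the scaling is uniquely forced and so each of the off-$\sigma$ inequalities must be certified by a bespoke comparison. Finding a suitable $\tau$ for each chord of the pentagon hinges on the combinatorial observation that the complement of any single chord in the 5-cycle still contains a subgraph large enough to complete the chord into the non-fixed part of a genuine permutation, realized here by a 3-cycle disjoint from a transposition on the remaining three vertices. Confirming the existence of such $\tau$ for every chord and diagonal in the fixed-scaling cases, together with verifying that the cases with transposition-freedom admit a compatible choice of free parameter by matching upper and lower bounds from comparisons like $\tau=e$ and $\tau=(i,j)\sigma$, constitutes the core technical content of the proof.
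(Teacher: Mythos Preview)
Your plan parallels the paper's proof closely: both do a case analysis on the cycle type of $\sigma$, invoke Lemma~1 to eliminate 4-cycles, and derive each required nonnegativity from the minimality of $\sigma$ by comparison with a suitable permutation $\tau$ (the paper phrases this as ``if this entry were negative the tropical determinant would be negative''). Your LP reformulation is a clean way to package the same idea, and your 4-cycle reduction via the subset-of-zero-pattern observation is correct.

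Two points deserve attention. First, ``the method of joints'' is not this comparison technique; in the paper that phrase names the rank-three lifting construction of Proposition~2, so your use of it here is a misnomer. Second, and more substantively, your treatment of the free-parameter cases as ``matching upper and lower bounds'' is adequate for the one-parameter types $(2,1^3)$ and $(3,2)$, but the $(2^2,1)$ type has \emph{two} free parameters, so the feasibility region is two-dimensional and pairwise bound-matching does not suffice: you will need either a Fourier--Motzkin elimination or the paper's sequential adjustment procedure (first fix the diagonals within each transposition block, then the $2\times 2$ cross-block, then column~5, re-adjusting the parameters at each stage and checking via a comparison $\tau$ that the adjustment is allowed). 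This is in fact the case that costs the paper the most effort, contrary to your assessment that the rigid 5-cycle and $(3,1^2)$ cases are the main obstacle---those go through exactly as smoothly as your examples suggest.
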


\begin{proof}
  Note that within this proof, and only within this proof, if we are talking about the ``form'' of a matrix a blank entry can have any value, positive or negative. Also, note that when we scale row / column $i$ by an amount $c$, that means row $i$ is tropically multiplied by $c$, and column $i$ is also tropically multiplied by $c$, so, for example, the element $a_{i,i}$ is tropically multiplied by $c$ twice.

  If $\sigma = id$ then we can form $A'$ by scaling each row/column $i$ by $-a_{i,i}/2$ to obtain a matrix with the form
  \begin{center}
    $\left(\begin{array}{ccccc} 0 & & & & \\ & 0 & & & \\ & & 0 & & \\ & & & 0 & \\ & & & & 0 \end{array}\right)$.
  \end{center}
  The tropical determinant must be realized by $\sigma = id$, and the matrix must be symmetric. If any off-diagonal element, and its symmetric counterpart were negative, the tropical determinant would be negative, which would be a contradiction. So, all off-diagonal elements must be nonnegative, and $A'$ has the desired properties.

  If $\sigma$ is a transposition ($2$-cycle) then we can assume without loss of generalitiy that $\sigma = (12)$. Scale row / column $1$ by $-a_{1,2}/2$, and row / column $2$ by the same amount. Then, scale row / column $3$ by $-a_{3,3}/2$, and follow the same approach for row / column 4 and row / column 5. The matrix obtained from this scaling must have the form
  \begin{center}
    $\left(\begin{array}{ccccc} & 0 & & & \\ 0 & & & & \\ & & 0 & & \\ & & & 0 & \\ & & & & 0 \end{array}\right)$.
  \end{center}
  If $a_{1,1}$ is negative and $a_{2,2} < -a_{1,1}$, then the tropical determinant would be negative. If $a_{1,1}$ is negative and $a_{2,2} \geq -a_{1,1}$, then if we scale row / column $1$ by $-a_{1,1}/2$, and row / column $2$ by $a_{1,1}/2$, then all diagonal terms are non-negative. If any off-diagonal element and its symmetric counterpart are negative then if the element were in the bottom-right $3 \times 3$ submatrix the tropical determinant would be negative. If the element is in the top-right $2 \times 3$ submatrix, we may assume, possibly after a diagonal permutation, that it's element $a_{1,3}$, and this in the minimal element of the matrix. If $a_{2,2} < -2a_{1,3}$ then the tropical determinant would be negative. If we scale row / column $1$ by $-a_{1,3}$, and row / column $2$ by $a_{1,3}$, then the matrix maintains the same form above, with $a_{1,3} = 0$, and all the diagonal elements non-negative. This can be repeated until there are no negative elements in the top-right $2 \times 3$ submatrix. At this point, we have our desired matrix $A'$.

  If $\sigma$ is a $3$-cycle we can assume without loss of generality that $\sigma = (123)$. Scale row / column $2$ by $-a_{1,2}$, and row / column $3$ by $-a_{2,3}$. Then, scale rows / columns $1$ and $3$ by $-a_{1,3}/2$, and row / column $2$ by $a_{1,3}/2$. Scale row / column $4$ by $-a_{4,4}/2$, and row / column $5$ by $-a_{5,5}/2$. This scaled matrix will have the form
  \begin{center}
    $\left(\begin{array}{ccccc} & 0 & 0 & & \\ 0 & & 0 & & \\ 0 & 0 & & & \\ & & & 0 & \\ & & & & 0 \end{array}\right)$,
  \end{center}
  and its tropical determinant must be $0$. As in the previous example, if any blank entry, and its symmetric counterpart, were negative the matrix would have negative determinant. So, we have our desired matrix $A'$.

  If $\sigma$ is the product of two transpositions we can assume without loss of generality that $\sigma = (12)(34)$. Scale row / column $1$ by $-a_{1,2}/2$ and row / column $2$ by $-a_{1,2}/2$. If after this scaling either of the top two diagonal terms are negative we can scale as we did in the $\sigma = (12)$ case to keep the off-diagonal terms $0$ and make the diagonal terms nonnegative. The same can be done for the $2 \times 2$ block corresponding with the transposition $(34)$. Scale row /column $5$ by $-a_{5,5}/2$ to get the matrix with form
  \begin{center}
    $\left(\begin{array}{ccccc} & 0 & & & \\ 0 & & & & \\ & & & 0 & \\ & & 0 & & \\ & & & & 0 \end{array}\right)$.
  \end{center}
  If any element in the submatrix defined by rows 1 and 2, and columns 3 and 4, is negative, we may assume, possibly after a diagonal permutation, that it is $a_{2,3}$, and it is minimal over elements in that submatrix. If $-2a_{2,3} > a_{1,1}+a_{4,4}$ then the tropical determinant would be negative. We can scale row / column $2$ by $a_{1,1}/2$, and row / column $3$ by $a_{4,4}/2$. We can then scale row / column 1 by $-a_{1,1}/2$ and row / column 4 by $-a_{4,4}/2$. This would maintain the form of the matrix, maintain the diagonal elements as non-negative, and make $a_{2,3}$ non-negative. This can be repeated until there are no negative elements in the submatrix defined by rows 1 and 2, and columns 3 and 4. If any element in the submatrix defined by rows 1, 2, 3, 4 and column 5 is negative, we may assume, possibly after a diagonal permutation, that it is $a_{1,5}$, and it is minimal over elements in that submatrix. If $-2a_{1,5} > a_{2,2}$, then the tropical determinant would be negative. We can scale row / column 1 by $-a_{1,5}$, and row / column 2 by $a_{1,5}$. This maintains the form of the matrix, maintains the diagonal elements as non-negative, and makes $a_{1,5}$ nonnegative. This can be repeated until there are no negative elements in the submatrix defined by rows 1, 2, 3, 4 and column 5. This gives us our desired matrix $A'$.

  If $\sigma$ is a 4-cycle, by Lemma 1 we can instead assume it is the product of two transpositions handled above.

  If $\sigma$ is a $3$-cycle and a tranposition we can assume without loss of generality that $\sigma = (123)(45)$. We can scale the first three indices exactly as we did in the $\sigma = (123)$ case. If we then scale both rows / columns $4$ and $5$ by $-a_{4,5}/2$ we construct a matrix of the form
  \begin{center}
    $\left(\begin{array}{ccccc} & 0 & 0 & & \\ 0 & & 0 & & \\ 0 & 0 & & & \\ & & & & 0 \\ & & & 0 & \end{array}\right)$.
  \end{center}
  where the tropical determinant is $0$. If any entry along the top three diagonal terms were negative the determinant of the matrix would be negative. If $a_{4,4} < 0$, then $a_{5,5} \geq -a_{4,4}$ given the tropical determinant is $0$. We can scale row / column $4$ by $-a_{4,4}/2$, and row / column $5$ by the opposite amount. This keeps the matrix in the form above, but with $0$ in the $(4,4)$ entry. Exactly the same reasoning applies if $a_{5,5} < 0$. If any other entry were negative we can assume without loss of generality that the minimum entry in the matrix is $a_{3,4}$ and its symmetric counterpart $a_{4,3}$. If $a_{5,5} < -2a_{3,4}$ the matrix would have negative determinant, which would be a contradiction. If we scale row / column $4$ by $-a_{3,4}''$, and row / column $5$ by the opposite, then the matrix maintains the form above, but with $a_{3,4}$ nonnegative. This can be repeated until there are no negative elements, and we obtain our matrix $A'$.

  If $\sigma$ is a $5$-cycle then we can assume without loss of generality that $\sigma = (12345)$. Scale row / column $2$ by $-a_{1,2}/2$, row / column $3$ by $-a_{2,3}/2$, and so on until row / column $5$. Next, scale all the rows / columns with odd labels by an amount equal to $-a_{1,5}/2$, and scale all the rows / columns with even labels by an amount equal to $a_{1,5}/2$. The matrix obtained from this scaling must have the form
  \begin{center}
    $\left(\begin{array}{ccccc} & 0 & & & 0 \\ 0 & & 0 & & \\ & 0 & & 0 & \\ & & 0 & & 0 \\ 0 & & & 0 & \end{array}\right)$,
  \end{center}
  and its tropical determinant must be realized by $\sigma = (12345)$, which means its tropical determinant must be $0$. If any blank entry, and its symmetric counterpart, were negative the tropical determinant of the matrix would be negative. So, we have our matrix $A'$.

  This exhausts all possibilities, and our proposition is proven.
\end{proof}

In this section from here on we will assume without loss of generality that all $5 \times 5$ matrices have been symmetrically scaled to satisfy the properties of Proposition 1.

We will frequently want to deal with all matrices that have a certain structure, and this structure will be captured by the \emph{form} of the matrix.

A \emph{form matrix} is a matrix in which every entry is either blank, a non-negative constant, or the symbol $''+''$. A nonnegative matrix $A$ has the \emph{form} of a form matrix $A'$ if everywhere $A'$ has a constant, $A$ has the same constant, and everywhere $A'$ has a $''+''$, $A$ has a positive entry.

For example, the matrix
\begin{center}
  $\left(\begin{array}{ccc} 0 & 2 & 1 \\ 2 & 0 & 3 \\ 1 & 3 & 0 \end{array}\right)$
\end{center}
\vspace{.1 in}
has any of the following forms:
\begin{center}
  $\left(\begin{array}{ccc} 0 & + & + \\ + & 0 & + \\ + & + & 0 \end{array}\right)$, \hspace{.1 in} $\left(\begin{array}{ccc} 0 & 2 & \\ 2 & 0 & + \\ & + & 0 \end{array}\right)$, \hspace{.1 in} $\left(\begin{array}{ccc} & + & + \\ + & 0 & + \\ + & + & 0 \end{array}\right)$, \hspace{.1 in} $\left(\begin{array}{ccc} 0 & 2 & 1 \\ 2 & 0 & 3 \\ 1 & 3 & 0 \end{array}\right)$.
\end{center}
It does not, however, have the form
\begin{center}
  $\left(\begin{array}{ccc} + & + & + \\ + & 0 & + \\ + & + & 0 \end{array}\right)$,
\end{center}
because it has a $0$ as its upper-left entry.

\subsection{The method of joints}

We now define the ``method of joints'', which will be the primary method by which we prove the main theorem in this paper.

Suppose $A$ is a symmetric matrix, and there are distinct indices $i$ and $j$ (assume without loss of generality $i < j$) such that:
\begin{itemize}
    \item The principal submatrix $A_{ii}$ is symmetrically tropically singular, and there are distinct minimizing monomials $X_{\sigma_{1}}, X_{\sigma_{2}}$ of $A_{ii}$, such that the variables in $X_{\sigma_{1}}$ involving the index $j$ are not the same as the variables in $X_{\sigma_{2}}$ involving the index $j$.
    \item The same is true with $i$ and $j$ reversed.
    \item The submatrix $A_{ji}$ is symmetrically tropically singular, and there are two minimizing monomials $X_{\tau_{1}}, X_{\tau_{2}}$ of $A_{ji}$ such that $X_{\tau_{1}}$ contains the variable $X_{i,j}$, while $X_{\tau_{2}}$ does not.
\end{itemize}
In this case, the indices $i$ and $j$ are called \emph{joints} of the matrix $A$. If the submatrix $A_{ii}$ satisfies the first condition above, we say it \emph{satisfies the joint requirement} for joints $i$ and $j$. Similarly for the submatrix $A_{jj}$.

For example, consider a matrix $A$ of the form
\begin{center}
  $\left(\begin{array}{ccccc}  & 0 & & & \\ 0 & & & & \\ & & 0 & 0 & 0 \\ & & 0 & 0 & 0 \\ & & 0 & 0 & 0 \end{array}\right)$.
\end{center}
We will demonstrate this matrix has joints $4$ and $5$. 

The principal submatrix $A_{44}$ has the form
\begin{center}
  $\left(\begin{array}{cccc} & 0 & & \\ 0 & & & \\ & & 0 & 0 \\ & & 0 & 0 \end{array}\right)$.
\end{center}
This submatrix is symmetrically tropically singular, with minimizing monomials $X_{1,2}^{2}X_{3,3}X_{5,5}$ and $X_{1,2}^{2}X_{3,5}^{2}$. In particular, the only variable in the first monomial involving the index $5$ is $X_{5,5}$, while the second monomial contains the variable $X_{3,5}$. So, $A_{44}$ satisfies the joint requirement for joints $4$ and $5$. Identical reasoning can be applied to the principal submatrix $A_{55}$. 

The submatrix $A_{54}$ has the form
\begin{center}
  $\left(\begin{array}{cccc} & 0 & & \\ 0 & & & \\ & & 0 & 0 \\ & & 0 & 0 \end{array}\right)$.
\end{center}
The submatrix is symmetrically tropically singular, with minimizing monomials $X_{1,2}^{2}X_{3,3}X_{4,5}$ and $X_{1,2}^{2}X_{3,4}X_{3,5}$. One of these minimizing monomials contains the variable $X_{4,5}$, while the other does not. Therefore, $A$ has joints $4$ and $5$.

Our proof that the $4 \times 4$ minors of a symmetric $5 \times 5$ matrix are a tropical basis rests upon first proving that every symmetric matrix over $\mathbb{R}$ with joints has symmetric Kapranov rank of at most three. We then prove an exceptional case of a $5 \times 5$ symmetric matrix over $\mathbb{R}$ that does not have joints, but still has symmetric Kapranov rank three. Finally, we prove that if the $4 \times 4$ submatrices of a $5 \times 5$ symmetric matrix are all symmetrically tropically singular then either $A$ has joints, or $A$ has the form of the exceptional case. 

\begin{proposition}
  If a $5 \times 5$ symmetric matrix $A$ has joints, then it has symmetric Kapranov rank at most three.
\end{proposition}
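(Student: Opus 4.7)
The plan is to construct an explicit rank-at-most-$3$ symmetric lift $\tilde A \in \tilde K^{5\times 5}$ of $A$. After a diagonal permutation I assume the joint indices are $4$ and $5$, and apply Proposition~1 to normalize $A$. Because $\tilde K$ is algebraically closed of characteristic zero, every symmetric rank-$r$ matrix over $\tilde K$ factors as $V V^T$ with $V \in \tilde K^{5\times r}$, so it suffices to exhibit a symmetric $\tilde A$ of rank at most $3$ whose entries have the prescribed degrees.

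The natural construction writes the lift in block form
\begin{center}
$\tilde A = \begin{pmatrix} \tilde M & \tilde C^T \\ \tilde C & \tilde D \end{pmatrix}$,
\end{center}
with $\tilde M$ a $3\times 3$ symmetric lift of the principal submatrix on rows and columns $\{1,2,3\}$, and sets $\tilde M_{p,q} := c_{p,q}\,t^{A_{p,q}}$, $\tilde C_{i,k} := c_{i,k}\,t^{A_{i,k}}$ for generic complex constants $c_{\cdot,\cdot}$, and finally $\tilde D := \tilde C \tilde M^{-1} \tilde C^T$, forcing rank at most $3$ by the Schur-complement identity. The top-left $3\times 3$ and off-diagonal $2\times 3$ blocks carry correct degrees by construction; what remains is to match the degrees of the three entries $\tilde D_{4,4}, \tilde D_{5,5}, \tilde D_{4,5}$. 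Using the bordered-determinant identity each such entry equals $-\det\bigl(\begin{smallmatrix} \tilde M & \tilde C_j^T \\ \tilde C_i & 0 \end{smallmatrix}\bigr)/\det \tilde M$, and the tropicalization of its $4\times 4$ numerator is the symmetric tropical determinant of the $4\times 4$ submatrix $A_{55}$, $A_{44}$, or $A_{54}$ with the forbidden variable $X_{4,4}$, $X_{5,5}$, or $X_{4,5}$ suppressed. The three joint hypotheses are tailored precisely to supply, in each case, a minimizing monomial of the relevant $4\times 4$ submatrix that avoids the forbidden variable, while comparing against the minimizing monomial that does use it yields $\deg \tilde D_{i,j} = A_{i,j}$.

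The main obstacle has two strands. First, the clean Schur-complement argument presupposes that the symmetric tropical determinant of the relevant $4 \times 4$ submatrix equals $A_{i,j} + \mathrm{symm\,tropdet}(M)$; the joint definition does not state this directly, so one must use the normal form supplied by Proposition~1 together with the way joints interact with the realizing permutation of $A$ to deduce it, or adjust the construction in the residual configurations (such as $\sigma = (12345)$ where no diagonal entry is forced to vanish). Second, I must rule out an identical cancellation of the leading Puiseux coefficient of each $\tilde D_{i,j}$: this coefficient is a polynomial in the $c$'s summed with signs from the determinant expansion and multiplicities from the equivalence $X_{p,q}=X_{q,p}$, and the full content of the joint definition --- \emph{two} minimizing monomials differing in their treatment of the complementary joint index --- is what prevents these contributions from collapsing to zero as a polynomial in the $c$'s. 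I expect the bulk of the technical work to be a case analysis on the cycle structures of the pair of minimizing monomials realizing each joint (identity-with-transposition, double transposition, $3$-cycle, $3$-cycle-with-transposition, etc.), verifying in each configuration that the generic Schur-complement construction carries through, or else exhibiting a lightly modified variant using higher-order Puiseux corrections in $\tilde M$ or $\tilde C$ to engineer the correct leading degree.
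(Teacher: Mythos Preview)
Your Schur-complement approach shares its skeleton with the paper's proof but diverges at a crucial point, and the divergence creates a real gap. By fixing the off-diagonal block $\tilde C$ with fully generic leading coefficients and then \emph{computing} $\tilde D=\tilde C\tilde M^{-1}\tilde C^{T}$, you leave no freedom in $\tilde D_{4,4}$; its degree is forced to be
\[
\Bigl(\min_{\sigma:\,\sigma(4)\neq 4}\textstyle\sum_{k} A_{k,\sigma(k)}\Bigr)-\operatorname{tropdet}(M),
\]
and for this to equal $A_{4,4}$ you need $\operatorname{tropdet}(A_{55})=A_{4,4}+\operatorname{tropdet}(M)$, i.e.\ some minimizing monomial of $A_{55}$ must \emph{fix} index $4$. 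The joint condition does not say this: it only asks for two minimizing monomials whose index-$4$ variables differ, and both may well avoid $X_{4,4}$ (for instance one uses $X_{1,4}^{2}$ and the other $X_{2,4}^{2}$). In that situation your $\tilde D_{4,4}$ comes out with degree strictly less than $A_{4,4}$ and the lift fails. The same problem arises for $\tilde D_{5,5}$. (Your argument \emph{does} go through for $\tilde D_{4,5}$, because the third joint condition explicitly supplies a minimizing monomial of $A_{54}$ containing $X_{4,5}$, whose $3\times 3$ complement is exactly $M$.)

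The paper avoids this by applying Kapranov's theorem not in the single variable $x_{4,4}$---which would be equivalent to your Schur complement---but in all four variables $x_{1,4},x_{2,4},x_{3,4},x_{4,4}$ simultaneously. With the $3\times 3$ block generic, $\det\tilde X_{55}$ is a polynomial in these four variables whose tropicalization is the tropical determinant of $A_{55}$, and the joint condition is \emph{precisely} the statement that $(A_{1,4},\dots,A_{4,4})$ lies on this tropical hypersurface. Kapranov then returns a lift $(a_{1,4},\dots,a_{4,4})$ of the full $4$-tuple in which the $a_{k,4}$ for $k\le 3$ are typically \emph{not} generic: they are chosen so that certain leading terms cancel and $a_{4,4}$ lands at the correct degree. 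The same is done for column $5$; a degree-zero rescaling of column $4$ restores enough independence between the two columns for a final one-variable Kapranov step to produce $a_{4,5}$. Your proposed case analysis on cycle types would amount to reconstructing this Kapranov lift by hand, and Proposition~1 alone does not give enough structure to make that short; invoking Kapranov directly is both cleaner and what makes the joint definition exactly the right hypothesis.
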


\begin{proof}
  We will construct a symmetric rank three lift $\tilde{A}$ of $A$. After possibly a diagonal permutation we may assume $A$ has joints $4$ and $5$. We define the matrices:
  \begin{center}
    $X_{55} := \left(\begin{array}{cccc} A_{1,1} & A_{1,2} & A_{1,3} & X_{1,4} \\ A_{1,2} & A_{2,2} & A_{2,3} & X_{2,4} \\ A_{1,3} & A_{2,3} & A_{3,3} & X_{3,4} \\ X_{1,4} & X_{2,4} & X_{3,4} & X_{4,4} \end{array}\right)$,
  \end{center}
  
  and
  
  \begin{center}
    $\tilde{X}_{55} = \left(\begin{array}{cccc} a_{1,1} & a_{1,2} & a_{1,3} & x_{1,4} \\ a_{1,2} & a_{2,2} & a_{2,3} & x_{2,4} \\ a_{1,3} & a_{2,3} & a_{3,3} & x_{3,4} \\ x_{1,4} & x_{2,4} & x_{3,4} & x_{4,4}\end{array}\right)$,
  \end{center}
  where the $A_{i,j}$ are the same as the corresponding terms in the matrix $A$, and the $a_{i,j}$ terms are constants in the field $\tilde{K}$ such that $deg(a_{i,j}) = A_{i,j}$, but are otherwise generic. As the $a_{i,j}$ are generic, the tropicalization of the determinant of $\tilde{X}_{55}$ is the tropical determinant of $X_{55}$.
  
  By Kapranov's theorem if $(A_{1,4},A_{2,4},A_{3,4},A_{4,4})$ is a point on the tropical hypersurface given by the tropical determinant of $X_{55}$, then there is a lift to a point $(a_{1,4},a_{2,4},a_{3,4},a_{4,4})$ in $\tilde{K}^{4}$ on the hypersurface given by the determinant of $\tilde{X}_{55}$. This lift gives us a singular $4 \times 4$ matrix
  \begin{center}
    $\tilde{A}_{55} := \left(\begin{array}{cccc} a_{1,1} & a_{1,2} & a_{1,3} & a_{1,4} \\ a_{1,2} & a_{2,2} & a_{2,3} & a_{2,4} \\ a_{1,3} & a_{2,3} & a_{3,3} & a_{3,4} \\ a_{1,4} & a_{2,4} & a_{3,4} & a_{4,4} \end{array}\right)$,
  \end{center}
  that tropicalizes to the submatrix $A_{55}$ of $A$. An identical argument can be used to construct a singular lift of $A_{44}$
  \begin{center}
    $\tilde{A}_{44} = \left(\begin{array}{cccc} a_{1,1} & a_{1,2} & a_{1,3} & a_{1,5} \\ a_{1,2} & a_{2,2} & a_{2,3} & a_{2,5} \\ a_{1,3} & a_{2,3} & a_{3,3} & a_{3,5} \\ a_{1,5} & a_{2,5} & a_{3,5} & a_{5,5} \end{array}\right)$,
  \end{center}
  where the top-left $3 \times 3$ submatrics of $\tilde{A}_{44}$ and $\tilde{A}_{55}$ are identical. 
  
  We note that if we multiply the fourth column and the fourth row of $\tilde{A}_{55}$ by the same degree zero generic constant that we will still have a singular symmetric lift of $A_{55}$. So, we can assume the terms $a_{i,4}$ are generic relative to the terms $a_{j,5}$ for any $i,j \leq 5$, except for $a_{4,5}$ and $a_{5,4}$, which we have not yet determined, and which must, of course, be equal.
  
  All the entries in a lift of $A$ have now been determined except $a_{4,5} = a_{5,4}$. To get $a_{4,5}$ we examine the matrices:
  \begin{center}
    $X_{54} := \left(\begin{array}{cccc} A_{1,1} & A_{1,2} & A_{1,3} & A_{1,5} \\ A_{1,2} & A_{2,2} & A_{2,3} & A_{2,5} \\ A_{1,3} & A_{2,3} & A_{3,3} & A_{3,5} \\ A_{1,4} & A_{2,4} & A_{3,4} & X_{4,5} \end{array}\right)$,
  \end{center}
  
  and
  
  \begin{center}
    $\tilde{X}_{54} := \left(\begin{array}{cccc} a_{1,1} & a_{1,2} & a_{1,3} & a_{1,5} \\ a_{1,2} & a_{2,2} & a_{2,3} & a_{2,5} \\ a_{1,3} & a_{2,3} & a_{3,3} & a_{3,5} \\ a_{1,4} & a_{2,4} & a_{3,4} & x_{4,5} \end{array}\right)$.
  \end{center}
  The determinant of $\tilde{X}_{54}$ is a linear function in the variable $x_{4,5}$, and the tropical determinant of $X_{54}$ is a tropical linear function in the variable $X_{4,5}$. As the terms in the upper-left $3 \times 3$ submatrix of $\tilde{X}_{54}$ are generic, and the constant terms in the rightmost column of $\tilde{X}_{54}$ are generic with respect to the constant terms in the bottom row, the tropicalization of the determinant of $\tilde{X}_{54}$ is the determinant of $X_{54}$.
  
  Again, by Kapranov's theorem, if $A_{4,5}$ is on the tropical hypersurface given by the tropical determinant of $X_{54}$, then it lifts to a point on the determinant of $\tilde{X}_{54}$. In other words, if the tropical determinant of the matrix
  \begin{center}
    $\left(\begin{array}{cccc} A_{1,1} & A_{1,2} & A_{1,3} & A_{1,5} \\ A_{1,2} & A_{2,2} & A_{2,3} & A_{2,5} \\ A_{1,3} & A_{2,3} & A_{3,3} & A_{3,5} \\ A_{1,4} & A_{2,4} & A_{3,4} & A_{4,5} \end{array}\right)$
  \end{center}
  is realized by two minimizing monomials, one involving the variable $X_{4,5}$ and the other not, then there exists a value $a_{4,5} \in \tilde{K}$ that makes the matrix
  \begin{center}
    $\left(\begin{array}{cccc} a_{1,1} & a_{1,2} & a_{1,3} & a_{1,5} \\ a_{1,2} & a_{2,2} & a_{2,3} & a_{2,5} \\ a_{1,3} & a_{2,3} & a_{3,3} & a_{3,5} \\ a_{1,4} & a_{2,4} & a_{3,4} & a_{4,5} \end{array}\right)$
  \end{center}
  a singular lift of $A_{54}$.
  
  \vspace{.1 in}
  
  The requirements for our three applications of Kapranov's theorem are exactly the requirements that $4$ and $5$ are joints of $A$. So, if $A$ has joints $4$ and $5$ then we have now determined all the elements in a lift of the matrix $A$: 
  \begin{center}
    $\tilde{A} := \left(\begin{array}{ccccc} a_{1,1} & a_{1,2} & a_{1,3} & a_{1,4} & a_{1,5} \\ a_{1,2} & a_{2,2} & a_{2,3} & a_{2,4} & a_{2,5} \\ a_{1,3} & a_{2,3} & a_{3,3} & a_{3,4} & a_{3,5} \\ a_{1,4} & a_{2,4} & a_{3,4} & a_{4,4} & a_{4,5} \\ a_{1,5} & a_{2,5} & a_{3,5} & a_{4,5} & a_{5,5} \end{array}\right)$.
  \end{center}
  It remains to be proven that such a lift has rank three. We do this by first proving there is a linear combination of the first three columns equal to the fourth. As the entries in the upper-left $3 \times 3$ submatrix were chosen generically, this submatrix has rank three, and therefore there is a unique set of coefficients $c_{1},c_{2},c_{3} \in \tilde{K}$ such that
  \begin{center}
    $c_{1}\left(\begin{array}{c} a_{1,1} \\ a_{1,2} \\ a_{1,3} \end{array}\right) + c_{2}\left(\begin{array}{c} a_{1,2} \\ a_{2,2} \\ a_{2,3} \end{array}\right) + c_{3}\left(\begin{array}{c} a_{1,3} \\ a_{2,3} \\ a_{3,3} \end{array}\right) = \left(\begin{array}{c} a_{1,4} \\ a_{2,4} \\ a_{3,4} \end{array}\right)$.
  \end{center}
  That this unique set of coefficients also satisfy
  \begin{center}
    $c_{1}a_{1,4} + c_{2}a_{2,4} + c_{3}a_{3,4} = a_{4,4}$,
  \end{center}
  and
  \begin{center}
    $c_{1}a_{1,5} + c_{2}a_{2,5} + c_{3}a_{3,5} = a_{4,5}$
  \end{center}
  follows immediately from the singularity of $\tilde{A}_{45}$ and $\tilde{A}_{55}$, respectively. Identical reasoning proves that there exists a linear combination of the first three columns of $\tilde{A}$ equal to the fifth, using the singularity of $\tilde{A}_{54}$ (which, as it is the transpose of $\tilde{A}_{45}$, follows from the singularity of $\tilde{A}_{45}$) and $\tilde{A}_{44}$. Therefore $\tilde{A}$ is a rank three lift of $A$, and so $A$ has symmetric Kapranov rank at most three.
\end{proof}

\subsection{The exceptional case}

In our analysis of $5 \times 5$ symmetric matrices with symmetric tropical rank three or less, there is one possible form that does not have joints, but which still has symmetric Kapranov rank three.

\begin{proposition}
  If a symmetric tropical matrix $A$ has the form:   
  \begin{center}
    $\left(\begin{array}{ccccc} 0 & 0 & + & + & N_{1} \\ 0 & 0 & + & + & N_{2} \\ + & + & 0 & 0 & P \\ + & + & 0 & 0 & P \\ N_{1} & N_{2} & P & P & 0 \end{array}\right)$,
  \end{center}
  with $N_{1},P > 0$, $N_{2} \geq N_{1}$, and $N_{1} \otimes P$ less than any element in the $2 \times 2$ submatrix determined by rows 1 and 2, and columns 3 and 4, then $A$ has symmetric Kapranov rank three.
\end{proposition}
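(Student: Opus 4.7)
The plan is to construct an explicit symmetric rank-$3$ lift $\tilde A$ of $A$ over $\tilde K$. Every symmetric matrix of rank at most $3$ can be written $\tilde A = BB^{T}$ for some $5\times 3$ matrix $B$, so I would parameterize such a $B$ following the block structure of the form. A natural ansatz is
\[
B=\begin{pmatrix} 1 & 0 & 0 \\ 1 & s_1 & s_2 \\ r & 1 & u \\ r' & 1 & s' \\ \mu_1 & \mu_2 & 1 \end{pmatrix},
\]
with entries in $\tilde K$ to be chosen. The first two rows lie close to the first basis direction (forcing the top-left $2\times 2$ block of $\tilde A$ to have degree $0$), the middle two rows close to the second (forcing the middle $2\times 2$ block to be $0$ in degree), and the last row has its dominant contribution in the third direction.

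I would then work through the fifteen degree conditions $\deg(\tilde A_{i,j})=A_{i,j}$. The single-term entries immediately pin down $\deg r = A_{1,3}$, $\deg r' = A_{1,4}$, $\deg \mu_1 = N_1$, $\deg \mu_2 = P$. The degree-$0$ block conditions force $\deg s_1,\deg s_2,\deg u,\deg s'\ge 0$, and $\tilde A_{3,5},\tilde A_{4,5}$ additionally force $\deg u,\deg s'>P$. The two nontrivial families of constraints are on $\tilde A_{2,5}=\mu_1+s_1\mu_2+s_2$ (required to have degree $N_2\ge N_1$) and on $\tilde A_{2,3}=r+s_1+s_2u$, $\tilde A_{2,4}=r'+s_1+s_2s'$ (each required to have degree strictly greater than $N_1\odot P$). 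For $\tilde A_{2,5}$ I would arrange a cancellation between the leading terms of $\mu_1$ and $s_2$ at degree $N_1$, so that $s_2$ residually produces degree $N_2$; this is compatible with $\deg s_2\ge 0$ because $N_1>0$.

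The principal obstacle is the last pair, since $s_1$ is shared between $\tilde A_{2,3}$ and $\tilde A_{2,4}$. Here the hypothesis $N_1\odot P<A_{i,j}$ is essential: it lets me choose $\deg u,\deg s'$ larger than $A_{2,3}-N_1$ and $A_{2,4}-N_1$ respectively, so that the $s_2u$ and $s_2s'$ contributions are pushed past the target degrees. The remaining work is to arrange $\deg(r+s_1)=A_{2,3}$ and $\deg(r'+s_1)=A_{2,4}$, which I would handle by realizing $s_1$ as a Hahn series with independent leading coefficients at each of the degrees $A_{1,3},A_{1,4},A_{2,3},A_{2,4}$, chosen to cancel the leading terms of $r$ and $r'$ as needed. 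This is where the bulk of the careful bookkeeping sits.

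Finally I would verify $\mathrm{rank}(B)=3$ by computing the determinant of the submatrix on rows $1,3,5$ of $B$, which equals $1-u\mu_2$ at leading order and so is a unit in $\tilde K$. Hence $\tilde A = BB^{T}$ is a symmetric rank-$3$ lift of $A$, giving symmetric Kapranov rank at most $3$. Combined with the observation that the $3\times 3$ principal submatrix of $A$ on rows and columns $\{1,3,5\}$ is symmetrically tropically nonsingular (the identity permutation is the unique minimizing monomial, using $N_1,P>0$ and $A_{1,3}>N_1+P$), this gives symmetric Kapranov rank exactly $3$.
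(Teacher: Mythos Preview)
Your lower-bound argument via the principal $3\times 3$ submatrix on $\{1,3,5\}$ is correct and matches the paper. The gap is in the upper bound: the ansatz for $B$ is too rigid to handle all admissible configurations of the four ``$+$'' entries $A_{1,3},A_{1,4},A_{2,3},A_{2,4}$.

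Concretely, after you push $s_{2}u$ and $s_{2}s'$ past the targets, you are left needing $\deg(r+s_{1})=A_{2,3}$ and $\deg(r'+s_{1})=A_{2,4}$, where $\deg r=A_{1,3}$ and $\deg r'=A_{1,4}$. Take for instance $A_{1,3}=10$, $A_{1,4}=5$, $A_{2,3}=2$, $A_{2,4}=5$, with $N_{1}=P=\tfrac12$ (so $N_{1}\odot P=1$ lies below every ``$+$'' entry, as the hypothesis demands). The first requirement forces $s_{1}$ to have a nonzero term in degree $2$, since $r$ contributes nothing below degree $10$; hence $\deg s_{1}\le 2$. The second requires $r'+s_{1}$ to have no terms below degree $5$; since $r'$ already has none, neither can $s_{1}$, so $\deg s_{1}\ge 5$. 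These are incompatible, and no single Hahn series $s_{1}$ can do the job. The rigidity comes from fixing the first row of $B$ to be $(1,0,0)$: this makes $\tilde A_{1,3}=r$ and $\tilde A_{1,4}=r'$ pure, and then the common entry $s_{1}$ in row $2$ must simultaneously correct both columns. Letting $s_{2}u$ and $s_{2}s'$ contribute instead might rescue some cases, but that is not the plan you describe, and it introduces new coupling (e.g.\ through $\tilde A_{2,5}$ and $\tilde A_{3,4}$) that would need its own case analysis.

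The paper's proof avoids a fixed $BB^{T}$ parameterization altogether. It embeds $A$ as the principal submatrix $A'_{33}$ of an auxiliary $6\times 6$ symmetric matrix $A'$, lifts the upper-right $4\times 4$ block of $A'$ to a singular matrix (after a degree-zero column scaling to force one symmetric pair of entries to agree), and then reads off the degrees of the coefficients $\alpha,\beta,\gamma$ in the resulting column dependence. Those degree bounds, together with Kapranov's theorem applied to several further $4\times 4$ blocks, determine the remaining entries of a symmetric rank-$3$ lift $\tilde A'$; the submatrix $\tilde A'_{33}$ is then the desired lift of $A$.
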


\begin{proof}
    The principal submatrix formed from the columns and rows with indices $1$, $3$, and $5$ has the form
    \begin{center}
        $\left(\begin{array}{ccc} 0 & + & N_{1} \\ + & 0 & P \\ N_{1} & P & 0 \end{array}\right)$.
    \end{center}
    Any matrix with this form is symmetrically tropically nonsingular, and therefore $A$ must have symmetric tropical rank at least three. Consequently, its symmetric Kapranov rank must be at least three.
  
    To prove $A$ has symmetric Kapranov rank exactly three, we first augment the matrix $A$, producing a matrix $A'$ with the form
    \begin{center}
        $\left(\begin{array}{cccccc} 0 & 0 & 0 & + & + & N_{1} \\ 0 & 0 & 0 & + & + & N_{2} \\ 0 & 0 & 0 & P & P & 0 \\ + & + & P & 0 & 0 & P \\ + & + & P & 0 & 0 & P \\ N_{1} & N_{2} & P & P & 0 & 0 \end{array}\right)$,
    \end{center}
    where $A_{33}' = A$. If $A'$ has a lift $\tilde{A}'$ to a symmetric rank three matrix, then $\tilde{A}_{33}'$ will be a symmetric rank three lift of $A$. So, it is sufficient to prove that $A'$ has symmetric Kapranov rank three.
  
    The upper-right $4 \times 4$ submatrix of $A'$ is tropically singular, and therefore has a lift to a singular $4 \times 4$ matrix:
    \begin{center}
        $\left(\begin{array}{cccc} a_{1,3} & a_{1,4} & a_{1,5} & a_{1,6} \\ a_{2,3} & a_{2,4} & a_{2,5} & a_{2,6} \\ a_{3,3} & a_{3,4} & a_{3,5} & a_{3,6} \\ a_{4,3} & a_{4,4} & a_{4,5} & a_{4,6} \end{array}\right)$.
    \end{center}
    As $deg(a_{3,4}) = deg(a_{4,3})$ we can multiply the first column of this matrix by a degree zero constant, $a_{3,4}/a_{4,3}$, so that in the new matrix the $(3,4)$ entry and the $(4,3)$ entry are equal, and the matrix is still singular. So, we can assume the singular lift has $a_{3,4} = a_{4,3}$, and use it construct a lift for columns $3$ through $6$ of $A'$:
    \begin{center}
        $\left(\begin{array}{cccc} a_{1,3} & a_{1,4} & a_{1,5} & a_{1,6} \\ a_{2,3} & a_{2,4} & a_{2,5} & a_{2,6} \\ a_{3,3} & a_{3,4} & a_{3,5} & a_{3,6} \\ a_{3,4} & a_{4,4} & a_{4,5} & a_{4,6} \\ a_{3,5} & a_{4,5} & a_{5,5} & a_{5,6} \\ a_{3,6} & a_{4,6} & a_{5,6} & a_{6,6} \end{array}\right)$
    \end{center}
    where $a_{5,5}, a_{5,6}$, and $a_{6,6}$ have not yet been determined. We know there is a linear combination of columns $\textbf{a}_{3}, \textbf{a}_{4}$, and $\textbf{a}_{6}$ (the third, fourth, and sixth columns of $\tilde{A}'$) such that for rows $1$ through $4$:
    \begin{center}
        $\alpha\textbf{a}_{3} + \beta\textbf{a}_{4} + \gamma\textbf{a}_{6} = \textbf{a}_{5}$.
    \end{center}
  
   If $deg(\alpha)$ were minimal out of $deg(\alpha), deg(\beta)$, and $deg(\gamma)$, then in order for the linear relation above to hold on the third row we would need either $deg(\alpha) = P$ or $deg(\alpha) = deg(\gamma) \leq P$. In the first case the linear relation on the fourth row would be impossible. Define $M$ to be the minimum element in the $2 \times 2$ submatrix of $A$ determined by rows $1$ and $2$, and columns $3$ and $4$. In the second case, given $P \otimes N_{1} < M$, the linear relation on the first row would be impossible. So, $deg(\alpha)$ cannot be minimal.
  
  If $deg(\gamma)$ were minimal, then given $deg(\alpha)$ is not also minimal, for the linear relation on the third row to work we would need $deg(\gamma) = P$. This would make the linear relation on the fourth row impossible.
  
  So, $deg(\beta)$ must be uniquely minimal. In order for the linear relation on the fourth row to work out we must have $deg(\beta) = 0$, and in order for the linear relation on the third row to work out we must have $deg(\alpha), deg(\gamma) \geq P$. If $deg(\alpha) = P$, then the linear relation on the second row would be impossible. So, $deg(\alpha) > P$. 
  
  Pick $a_{6,6}$ such that $deg(a_{6,6}) = 0$, but otherwise generically. With this the linear relations define $a_{5,6}$ and $a_{5,5}$ as 
  \begin{center}
    $a_{5,6} = \alpha a_{3,6} + \beta a_{4,6} + \gamma a_{6,6}$,
    
    $a_{5,5} = \alpha a_{3,5} + \beta a_{4,5} + \gamma a_{5,6}$.
  \end{center}
  Given the requirements on the degrees of $\alpha, \beta, \gamma$, the known degrees of the terms from the lift of the upper-right $4 \times 4$ submatrix of $A'$, and that $a_{6,6}$ has degree $0$ but is otherwise generic, we must have $deg(a_{5,6}) = P$, and $deg(a_{5,5}) = 0$. 
  
  What remains is to find values for $a_{1,1}$, $a_{1,2}$, and $a_{2,2}$ such that the matrix
  \begin{center}
    $\left(\begin{array}{cccccc} a_{1,1} & a_{1,2} & a_{1,3} & a_{1,4} & a_{1,5} & a_{1,6} \\ a_{1,2} & a_{2,2} & a_{2,3} & a_{2,4} & a_{2,5} & a_{2,6} \\ a_{1,3} & a_{2,3} & a_{3,3} & a_{3,4} & a_{3,5} & a_{3,6} \\ a_{1,4} & a_{2,4} & a_{3,4} & a_{4,4} & a_{4,5} & a_{4,6} \\ a_{1,5} & a_{2,5} & a_{3,5} & a_{4,5} & a_{5,5} & a_{5,6} \\ a_{1,6} & a_{2,6} & a_{3,6} & a_{4,6} & a_{5,6} & a_{6,6} \end{array}\right)$
  \end{center}
  has rank three and tropicalizes to $A'$. If we examine the submatrix formed by columns $1, 3, 4$ and $6$,
  \begin{center}
    $\left(\begin{array}{cccc} a_{1,1} & a_{1,3} & a_{1,4} & a_{1,6} \\ a_{1,2} & a_{2,3} & a_{2,4} & a_{2,6} \\ a_{1,3} & a_{3,3} & a_{3,4} & a_{3,6} \\ a_{1,4} & a_{3,4} & a_{4,4} & a_{4,6} \\ a_{1,5} & a_{3,5} & a_{4,5} & a_{5,6} \\ a_{1,6} & a_{3,6} & a_{4,6} & a_{6,6} \end{array}\right)$,
  \end{center}
  then we note that, as there is a linear combination of columns $\textbf{a}_{3}, \textbf{a}_{4}, \textbf{a}_{6}$ equal to column $\textbf{a}_{5}$ there is linear combination of rows $3$ $4$, and $6$ in the above $6 \times 4$ matrix equal to row $5$. We pick $a_{1,1}$ so that the matrix
  \begin{center}
    $\left(\begin{array}{cccc} a_{1,1} & a_{1,3} & a_{1,4} & a_{1,6} \\ a_{1,3} & a_{3,3} & a_{3,4} & a_{3,6} \\ a_{1,4} & a_{3,4} & a_{4,4} & a_{4,6} \\ a_{1,6} & a_{3,6} & a_{4,6} & a_{6,6} \end{array}\right)$
  \end{center}
  is singular. Given the known degrees of the elements in the matrix, and that $a_{6,6}$ is generic, we must have $deg(a_{1,1}) = 0$. We can use an identical method to construct $a_{1,2}$ of the appropriate degree. Therefore every row of the above $6 \times 4$ matrix can be constructed from rows $3$, $4$, and $6$, and so the matrix has rank three. In particular, this means the third column of $\tilde{A}'$ can be constructed as a linear combination of the first, fourth, and sixth columns.
  
  What remains to be proven is that $a_{2,2}$ can be chosen with the appropriate degree so that the second column of $\tilde{A}'$ can be written as a linear combination of the first, fourth, and sixth columns. To do this we examine the $6 \times 4$ submatrix
  \begin{center}
    $\left(\begin{array}{cccc} a_{1,1} & a_{1,2} & a_{1,4} & a_{1,6} \\ a_{1,2} & a_{2,2} & a_{2,4} & a_{2,6} \\ a_{1,3} & a_{2,3} & a_{3,4} & a_{3,6} \\ a_{1,4} & a_{2,4} & a_{4,4} & a_{4,6} \\ a_{1,5} & a_{2,5} & a_{4,5} & a_{5,6} \\ a_{1,6} & a_{2,6} & a_{4,6} & a_{6,6} \end{array}\right)$.
  \end{center}
  We already know rows $3$ and $5$ of this matrix can be written as a linear combination of rows $1$, $4$, and $6$. To prove this is also true for row $2$ we examine the submatrix
  \begin{center}
    $\left(\begin{array}{cccc} a_{1,1} & a_{1,2} & a_{1,4} & a_{1,6} \\ a_{1,2} & a_{2,2} & a_{2,4} & a_{2,6} \\ a_{1,4} & a_{2,4} & a_{4,4} & a_{4,6} \\ a_{1,6} & a_{2,6} & a_{4,6} & a_{6,6} \end{array}\right)$,
  \end{center}
  and note if we require it be singular, that determines a unique $a_{2,2}$, with $deg(a_{2,2}) = 0$. This means that every row of $\tilde{A}'$ can be written as a linear combination of rows $1$, $4$, and $6$, and therefore $\tilde{A}'$ has a rank three lift.
  
  As $A'$ has a symmetric rank three lift, so does $A$, and our proof is complete.
\end{proof}

\subsection{Searching for joints}

Note that in this subsection we will, throughout, assume that $A$ is a symmetric matrix with symmetric tropical rank at most three.

The proof that, with one exception, if every $4 \times 4$ submatrix of a $5 \times 5$ symmetric matrix is symmetrically tropically singular then the matrix must have joints involves the analysis of a number of cases, and will be broken down into many lemmas.

Before we go through these cases and prove these lemmas, we will need an additional fact concerning the permutations that realize the tropical determinant of a symmetrically singular $5 \times 5$ matrix.

\begin{lemma}
  If $A$ is a $5 \times 5$ symmetrically tropically singular matrix, then there is a permutation with a tranposition in its disjoint cycle decomposition realizing the tropical determinant.
\end{lemma}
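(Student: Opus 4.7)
The plan is to use the scaling from Proposition~1 to reduce to a setting where every entry of $A$ is nonnegative and the tropical determinant equals $0$, and then to carry out a combinatorial case analysis on the cycle types of the realizing permutations.

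Let $\sigma$ be a realizing permutation. By Proposition~1, after a diagonal permutation and symmetric scalings, I may assume $A\geq 0$, $tropdet(A)=0$, and $A_{i,\sigma(i)}=0$ for all $i$. By symmetric tropical singularity there is a second realizing permutation $\tau$ with a monomial distinct from $\sigma$'s. If either $\sigma$ or $\tau$ contains a transposition we are done, and by Lemma~1 if either contains a $4$-cycle I may replace it with a pair of transpositions. So I may assume both $\sigma$ and $\tau$ have cycle type in $\{\mathrm{id},(3,1,1),(5)\}$, and when the two types differ I choose $\sigma$ to be the identity or $3$-cycle rather than the $5$-cycle so that the scaling gives the maximum number of forced zero entries.

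Since $A\geq 0$, a permutation $\pi$ has monomial value $0$ exactly when every off-diagonal entry at a cycle-edge of $\pi$ and every diagonal entry at a fixed point of $\pi$ is zero. In particular, the cycle-edges and fixed-point diagonals of $\tau$ are also forced to be zero, and the task is to exhibit a transposition-containing permutation whose edges and fixed-point diagonals all lie in this zero set. The sub-cases $\sigma=\mathrm{id}$ and $(\sigma,\tau)=(3\text{-cycle},3\text{-cycle})$ reduce quickly by direct inspection: the combined forced zeros from $\sigma$ and $\tau$ yield a transposition or a product of two disjoint transpositions with value $0$.

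The substantive sub-cases, which I expect to be the main obstacle, are $(3\text{-cycle},5\text{-cycle})$ and $(5\text{-cycle},5\text{-cycle})$. In the first, scaling for $\sigma=(abc)$ with fixed points $\{d,e\}$ puts the triangle on $\{a,b,c\}$ in the zero set together with $A_{d,d}$ and $A_{e,e}$. A degree count on how the $5$-cycle $\tau$ distributes its edges between $\{a,b,c\}$ and $\{d,e\}$ shows $\tau$ uses either one or two edges inside the triangle: if two, then $(d,e)$ must also be a $\tau$-edge and the $(3,2)$-permutation $(abc)(de)$ has value $0$; if one, a triangle edge can be paired with a disjoint cross-edge of $\tau$ whose remaining fifth vertex lies in $\{d,e\}$, producing a $(2,2,1)$-permutation of value $0$. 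In the second sub-case a degree count forces two distinct $5$-cycles to share $0$, $2$, or $3$ edges (never $1$ or $4$), so the zero-edge graph contains at least seven of the ten edges of $K_5$; ruling out the three-edge missing configurations that cannot arise as the complement of two distinct $5$-cycles leaves only configurations in which the zero-edge graph contains a triangle together with a disjoint edge, yielding a $(3,2)$-permutation of value $0$. In every sub-case the resulting realizing permutation contains a transposition.
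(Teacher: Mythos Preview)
Your argument is correct and follows the same underlying idea as the paper: scale via Proposition~1 so that all entries are nonnegative and the tropical determinant is $0$, reduce (using Lemma~1) to realizing permutations of type identity, $3$-cycle, or $5$-cycle, and then exploit the extra zero entries forced by symmetric singularity to exhibit a transposition-containing realizing permutation. The paper organizes the case analysis differently---it fixes a single realizing $\sigma$, observes that symmetric singularity forces at least one additional zero entry beyond those in $\sigma$'s support, and asserts (without writing out the verification) that any such single extra zero already yields a permutation with a transposition; your version instead tracks the full second realizing permutation $\tau$ and splits by the pair of cycle types, which makes the $(3,5)$ and $(5,5)$ sub-cases more explicit (your edge-counting and the observation that the only $7$-edge graph on $K_5$ lacking a triangle-plus-disjoint-edge is $K_5$ minus a triangle, which contains no Hamiltonian cycle, are exactly what is needed to justify the paper's terse claim in the $5$-cycle case).
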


\begin{proof}
  If $\sigma$ realizes the tropical determinant of $A$, then if $\sigma$ has a $2$-cycle in its cycle decomposition there is nothing to prove. If the cycle decomposition of $\sigma$ has a $4$-cycle then by Lemma 1 there must also be a permutation realizing the tropical determinant that is the product of two transpositions. As for the other possibilities, after perhaps a diagonal permutation, the matrix $A$ must have one of the following forms:
  \begin{center}
    
    \begin{description}
      
    \item[Identity] : $\left(\begin{array}{ccccc} 0 & & & & \\ & 0 & & & \\ & & 0 & & \\ & & & 0 & \\ & & & & 0 \end{array}\right)$,
      
    \item[$3$-cycle] : $\left(\begin{array}{ccccc} 0 & & & & \\ & 0 & & & \\ & & & 0 & 0 \\ & & 0 & & 0 \\ & & 0 & 0 & \end{array}\right)$,
      
    \item[$5$-cycle] : $\left(\begin{array}{ccccc} & 0 & & & 0 \\ 0 & & 0 & & \\ & 0 & & 0 & \\ & & 0 & & 0 \\ 0 & & & 0 & \end{array}\right)$.
      
    \end{description}
    
  \end{center}    
  If $A$ is symmetrically singular then each of these matrices must have an additional $0$ term that is not specified above, and for any of these possibilities an additional $0$ term will introduce a permutation realizing the tropical determinant with a cycle decomposition that includes a transposition.
\end{proof}

After possibly a diagonal permutation, we may assume the upper-left $2 \times 2$ submatrix of $A$ has the form:
\begin{center}
  $\left(\begin{array}{cc} & 0 \\ 0 & \end{array}\right)$,
\end{center}
and there is a permutation that realizes the tropical determinant of $A$ whose disjoint cycle decomposition includes the transposition $(12)$.

\begin{lemma}
  If $A$ has symmetric tropical rank three, and does not have a permutation realizing the tropical determinant whose disjoint cycle decomposition is a product of transpositions, then $A$ has joints.
\end{lemma}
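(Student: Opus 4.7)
The plan is to first narrow down which permutations can realise the tropical determinant of $A$, normalise the matrix, and only then hunt for joints via case analysis. First I would invoke Lemma 2 to obtain a permutation $\sigma$ realising $tropdet(A)$ whose cycle decomposition contains at least one transposition; by hypothesis $\sigma$ is not a product of transpositions, so its decomposition must also contain a non-transposition cycle. The only cycle type in $S_5$ meeting both conditions is $(ab)(cde)$, so after a diagonal permutation I may assume $\sigma=(12)(345)$. Proposition 1 then lets me scale $A$ so that every entry is nonnegative, with $a_{1,2}=a_{3,4}=a_{4,5}=a_{3,5}=0$ and $tropdet(A)=0$.

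Next I would extract positivity constraints from the hypothesis: for every permutation $\tau$ that is a product of transpositions the sum $\sum_i a_{i,\tau(i)}$ must be strictly positive, and since all $a_{i,j}$ are nonnegative this forces at least one summand in the sum to be positive. Applying this to $\tau=(12)(34),(12)(35),(12)(45)$ immediately yields $a_{3,3},a_{4,4},a_{5,5}>0$; testing other transposition products such as $(13)(45),(14)(35),(15)(34)$ yields $2a_{1,k}+a_{2,2}>0$ for $k\in\{3,4,5\}$, and the mirror constraints $2a_{2,k}+a_{1,1}>0$ follow similarly from $(23)(45),(24)(35),(25)(34)$. I would compile the full list of such inequalities before continuing.

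With the matrix normalised, my first candidate for joints would be the pair $\{4,5\}$. The principal submatrix $A_{44}$ is symmetrically singular (since $A$ has symmetric tropical rank at most three) and has tropical determinant $0$ realised by $(12)(35)$, whose monomial $X_{1,2}^2X_{3,5}^2$ uses index $5$ solely through $X_{3,5}$. Because $a_{3,3}$ and $a_{5,5}$ are strictly positive, a second minimising monomial must come from additional zero entries of $A_{44}$. I would enumerate the possible locations of these extra zeros (configurations such as $a_{1,3}=a_{2,5}=0$, $a_{1,5}=a_{2,3}=0$, or $a_{1,1}=a_{2,2}=0$) and check in each case whether the resulting alternative minimising monomial uses index $5$ through a variable other than $X_{3,5}$, which is precisely what the joint condition for $\{4,5\}$ requires. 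An analogous sweep would handle $A_{55}$ and the off-diagonal submatrix $A_{54}$, where the joint condition demands minimising monomials that disagree on whether $X_{4,5}$ appears.

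The hard part will be the configurations in which every alternative minimising monomial of $A_{44}$ still uses index $5$ only through $X_{3,5}$: most notably when $a_{1,1}=a_{2,2}=0$, the permutation $(35)$ produces the monomial $X_{1,1}X_{2,2}X_{3,5}^2$, which duplicates the index-$5$ pattern of $(12)(35)$ and so fails the joint condition. In these stubborn cases I would fall back on $\{3,4\}$ or $\{3,5\}$, both symmetric to $\{4,5\}$ under permuting the elements of the $3$-cycle $(345)$, and as a last resort on $\{1,2\}$. The positivity inequalities gathered earlier should prevent every candidate pair from failing simultaneously; completing this bookkeeping, and verifying the joint condition for the off-diagonal submatrix in each surviving case, is the delicate finish to the argument.
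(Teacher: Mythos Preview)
Your reduction to the single cycle type $(ab)(cde)$ is too aggressive. In this paper ``product of transpositions'' is meant in the sense of the later Lemmas 7--10, i.e.\ a permutation whose disjoint cycle decomposition is two $2$-cycles. Under that reading the permutation supplied by Lemma~2 may very well be the bare transposition $(12)$ (with $3,4,5$ fixed), and the paper's own proof handles this as a separate case alongside $(12)(345)$. After normalisation the $(12)$ case gives the form with $a_{3,3}=a_{4,4}=a_{5,5}=0$ and $a_{3,4},a_{3,5},a_{4,5}>0$, which is not reachable from your $(12)(345)$ normalisation; your argument simply does not cover it.

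Even on the $(12)(345)$ case your plan is more laborious than necessary. The paper does not hunt among $\{4,5\},\{3,5\},\{3,4\}$ with a fallback to $\{1,2\}$; it goes straight for the pair $\{1,2\}$. The engine is the off-diagonal minor $A_{12}$: its tropical singularity forces an extra zero in column~$1$ and a matching extra zero in row~$2$ at the same index $i\in\{3,4,5\}$ (so $a_{1,i}=a_{2,i}=0$). Once that pair of zeros is in place, each of $A_{11}$ and $A_{22}$ visibly has one minimising monomial coming from the $(23)$ (respectively $(13)$) transposition and another coming from a permutation that touches the opposite index differently, so the joint requirement for $\{1,2\}$ is immediate. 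The same mechanism works verbatim in the missed $\sigma=(12)$ case. Your ``delicate finish'' evaporates if you attack $\{1,2\}$ first and use $A_{12}$ to generate the needed extra zeros, rather than trying to squeeze the joint condition out of $A_{44}$ and its siblings.
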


\begin{proof}
  As $A$ must have a permutation realizing the determinant that involves the transposition $(12)$, the only possibilities for this minimizing permutation are $(12)$ and $(12)(345)$, which would give $A$ the form:
  \begin{center}
    $\left(\begin{array}{ccccc}  & 0 & & & \\ 0 & & & & \\ & & 0 & + & + \\ & & + & 0 & + \\ & & + & + & 0 \end{array}\right)$, \hspace{.1 in} or \hspace{.1 in} $\left(\begin{array}{ccccc} & 0 & & & \\ 0 & & & & \\ & & + & 0 & 0 \\ & & 0 & + & 0 \\ & & 0 & 0 & + \end{array}\right)$.
  \end{center}
  
  In the first possibility the submatrix $A_{12}$ has the form:
  \begin{center}
    $\left(\begin{array}{cccc} 0 & & & \\ & 0 & + & + \\ & + & 0 & + \\ & + & + & 0 \end{array}\right)$.
  \end{center}
    
  The submatrix $A_{12}$ must be singular, and so there must be another $0$ term in the first row, and a corresponding $0$ term in the first column. By corresponding, we mean that if the $0$ in the first row of $A_{12}$ is in the $i$th column, then the $0$ in the first column of $A_{12}$ must be in the $i$th row. Taking this into account, after possibly a diagonal permutation, $A$ will have the form:
  \begin{center}
    $\left(\begin{array}{ccccc} & 0 & 0 & & \\ 0 & & 0 & & \\ 0 & 0 & 0 & + & + \\ & & + & 0 & + \\ & & + & + & 0 \end{array}\right)$.
  \end{center}
  
  The submatrix $A_{11}$ is
  \begin{center}
    $\left(\begin{array}{cccc} & 0 & & \\ 0 & 0 & + & + \\ & + & 0 & + \\ & + & + & 0 \end{array}\right)$.
  \end{center}  
  As this submatrix must be symmetrically tropically singular we can see from its form that there must be two permutations realizing the tropical determinant, one (noting $A_{11}$ inherits its indices from $A$) whose disjoint cycle decomposition contains the transposition $(23)$, and another whose disjoint cycle decomposition does not. The same will be true, mutatis mutandis, of the submatrix $A_{22}$. From this we can see $A$ has joints $1$ and $2$.
  
  As for the second possibility, the submatrix $A_{12}$ will have the form:
  \begin{center}
    $\left(\begin{array}{cccc} 0 & & & \\ & + & 0 & 0 \\ & 0 & + & 0 \\ & 0 & 0 & + \end{array}\right)$.
  \end{center}
  $A_{12}$ must be symmetrically tropically singular, and so there must be an additional $0$ term in the first row, and an additional $0$ term in the first column. Noting this, after possibly a diagonal permutation, the matrix $A$ must have one of the forms:
  \begin{center}
    $\left(\begin{array}{ccccc} & 0 & 0 & & \\ 0 & & 0 & & \\ 0 & 0 & + & 0 & 0 \\ & & 0 & + & 0 \\ & & 0 & 0 & + \end{array}\right)$, \hspace{.1 in} or \hspace{.1 in} $\left(\begin{array}{ccccc} & 0 & 0 & & \\ 0 & & & 0 & \\ 0 & & + & 0 & 0 \\ & 0 & 0 & + & 0 \\ & & 0 & 0 & + \end{array}\right)$.
  \end{center}
  Using essentially identical reasoning as in the first possibility, we find that $A$ will have joints $1$ and $2$.
\end{proof}

We now examine the possibilities for when there is a permutation that realizes the tropical determinant of $A$ with a disjoint cycle decomposition that is the product of two transpositions. After possibly a diagonal permutation, we may assume this disjoint cycle decomposition is $(12)(34)$.

\begin{lemma}
  Suppose the matrix $A$ has the form:
  \begin{center}
    $\left(\begin{array}{ccccc} + & 0 & & & \\ 0 & + & & & \\ & & + & 0 & \\ & & 0 & + & \\ & & & & 0 \end{array}\right)$.
  \end{center}
  Then $A$ has joints.
\end{lemma}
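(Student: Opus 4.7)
The plan is to first use singularity of $A_{55}$ to reduce to a canonical sub-form, and then to show that indices $3$ and $4$ are always joints of $A$. Since $A$ has symmetric tropical rank at most three, every $4\times 4$ submatrix is symmetrically tropically singular; in particular the principal submatrix $A_{55}$ on rows and columns $\{1,2,3,4\}$ is. Its tropical determinant is $0$, realized by $(12)(34)$ via $X_{1,2}^{2}X_{3,4}^{2}$. A second minimizing monomial is required; because $a_{1,1}, a_{2,2}, a_{3,3}, a_{4,4}$ are positive, any second minimizer must come from a derangement of $\{1,2,3,4\}$, and by Lemma 1 the $4$-cycles reduce to their associated products of two transpositions. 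The only candidates are therefore $(13)(24)$ (requiring $a_{1,3}=a_{2,4}=0$) and $(14)(23)$ (requiring $a_{1,4}=a_{2,3}=0$). The diagonal permutation swapping $3$ and $4$ preserves the stated form, so without loss of generality $a_{1,3}=a_{2,4}=0$; under this reduction the $4$-cycle $(1243)$ also realizes the tropical determinant of $A_{55}$, yielding a third minimizing monomial $X_{1,2}X_{2,4}X_{3,4}X_{1,3}$.

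Next I would verify the three joint conditions for the pair $(3,4)$. For the non-principal minor $A_{43}$ (rows $\{1,2,3,5\}$, columns $\{1,2,4,5\}$), restricting the permutations $(12)(34)$ and $(1243)$ of $\{1,\ldots,5\}$ to bijections on these indices produces the monomials $X_{1,2}X_{2,1}X_{3,4}X_{5,5}$ (containing $X_{3,4}$) and $X_{1,2}X_{2,4}X_{3,1}X_{5,5}$ (not containing $X_{3,4}$), each of value $0$. Because every entry of $A_{43}$ is nonnegative, both are minimizing, immediately giving the third joint condition.

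For the principal submatrices $A_{33}$ and $A_{44}$, I claim the joint requirement follows from their symmetric tropical singularity together with that of the remaining principal $4 \times 4$ submatrices. Take $A_{44}$ with $j=3$: its cheap minimizing candidate is the $3$-cycle $(123)$ fixing $5$, giving the monomial $X_{1,2}X_{2,3}X_{1,3}X_{5,5}$ of value $a_{2,3}$, with variables involving $3$ equal to $\{X_{1,3},X_{2,3}\}$. I would classify the remaining monomials of $A_{44}$ by their set of variables involving $3$ — the possible classes are $\{X_{3,3}\}$, $\{X_{1,3}\}$, $\{X_{2,3}\}$, $\{X_{3,5}\}$, $\{X_{1,3},X_{3,5}\}$, $\{X_{2,3},X_{3,5}\}$, and $\{X_{1,3},X_{2,3}\}$ — and argue that within each class at most one representative can attain the minimum of $A_{44}$. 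If two minimizers nonetheless lay in a single class, additional off-diagonal entries of $A$ would be forced to vanish; each such forced collapse, combined with the required singularity of $A_{11}$ or $A_{33}$, produces a new minimizer of $A_{44}$ in a different class. The analogous argument, via the residual involution $(1\leftrightarrow 2)(3\leftrightarrow 4)$ preserving the form, handles $A_{33}$ with $j=4$. The main obstacle is this careful tracking of how the singularities of different $4\times 4$ submatrices interact: a naive analysis of $A_{44}$ alone admits pathological sub-cases where all minimizers share the same variables involving $3$, but these are ruled out by the singularity hypothesis on the other principal minors.
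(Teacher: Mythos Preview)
Your reduction via $A_{55}$ is correct and matches the paper's: the positivity of the first four diagonal entries forces any second minimizing monomial of $A_{55}$ to come from a derangement, hence from $(13)(24)$ or $(14)(23)$, and after your normalization the matrix is (up to relabeling) exactly what the paper calls \emph{off-diagonal form}. Your verification of the non-principal minor $A_{43}$ is also fine.

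The gap is in the principal minors. You fix the joint pair $(3,4)$ at the outset and then try to show that $A_{44}$ always has two minimizers whose variables involving $3$ differ. This is not true from the singularity of $A_{44}$ alone, and your appeal to ``the singularity of $A_{11}$ or $A_{33}$'' is not a proof. Concretely: in your reduced form, take $a_{2,2}=2a_{2,5}>0$ and every other unspecified entry strictly larger than $a_{2,2}$. Then the tropical determinant of $A_{44}$ (on $\{1,2,3,5\}$) equals $a_{2,2}$, realized only by the monomials $X_{1,3}^{2}X_{2,2}X_{5,5}$ and $X_{1,3}^{2}X_{2,5}^{2}$, both in the class $\{X_{1,3}\}$. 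So the joint requirement at index $3$ fails for $A_{44}$. You have not explained what additional $4\times 4$ singularities would force a minimizer in a different class, and it is not clear that they do.

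The paper avoids this problem by \emph{not} fixing the joint pair in advance. Its proof passes to off-diagonal form, then carries out an explicit case analysis of $A_{11}$ according to how the minimum $M$ of the inner $2\times 2$ block compares with $2N$ (where $N$ is the minimum of the relevant entries in column~$5$); in each case the two minimizing monomials are exhibited directly. Crucially, the phrase ``up to a diagonal permutation'' in that case analysis permits swapping the two indices in a block, so that the eventual joint pair is whichever of $\{1,2\}\times\{3,4\}$ the case analysis selects. In your labeling the analogue would be: sometimes the correct joints are $(3,4)$, sometimes $(2,4)$, depending on which of $a_{2,2},a_{3,3},a_{2,3}$ and which of $a_{2,5},a_{3,5}$ realize the relevant minima. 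Your sketch, which commits to $(3,4)$ before this information is available, cannot be completed as written; you would need either to carry out the paper's $M$-versus-$2N$ case split, or to prove carefully that the ``pathological sub-cases'' you allude to are genuinely excluded by the other minors --- and that second route is not the one the paper takes.
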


\begin{proof}
  The submatrix $A_{55}$ must be symmetrically tropically singular, and therefore, after possibly a diagonal permutation, it must have the form
  \begin{center}
    $\left(\begin{array}{cccc} + & 0 & & 0 \\ 0 & + & 0 & \\ & 0 & + & 0 \\ 0 & & 0 & + \end{array}\right)$.  
  \end{center}
  After another diagonal permutation $A_{55}$ can be arranged to have the form
  \begin{center}
    $\left(\begin{array}{cccc} & & 0 & 0 \\ & & 0 & 0 \\ 0 & 0 & & \\ 0 & 0 & & \end{array}\right)$.
  \end{center}
  The matrix $A$ will have the corresponding form
  \begin{center}
    $\left(\begin{array}{ccccc} & & 0 & 0 & \\ & & 0 & 0 & \\ 0 & 0 & & & \\ 0 & 0 & & & \\ & & & & 0 \end{array}\right)$.
  \end{center}
  This is a form that will come up as a possibility in other cases, and we will refer to it as \emph{off-diagonal form}. We will complete our lemma by proving that any matrix in off-diagonal form must have joints.  
  
  If $A$ has off-diagonal form, the submatrix $A_{11}$ will have the form:
  \begin{center}
    $\left(\begin{array}{cccc} & 0 & 0 & \\ 0 & & & \\ 0 & & & \\ & & & 0 \end{array}\right)$.
  \end{center}
  This submatrix must be symmetrically tropically singular. Denote by $M$ the minimal element in the $2 \times 2$ submatrix formed by rows $3$ and $4$, and columns $3$ and $4$ (recall $A_{11}$ inherits its indices from $A$), and denote by $N$ the minimal element in the $2 \times 1$ submatrix formed by rows $3$ and $4$, and column $5$. Suppose $M < 2N$. Given $A_{11}$ is symmetrically tropically singular it must, up to a diagonal permutation, have one of the two forms:
  \begin{center}
    $\left(\begin{array}{cccc} & 0 & 0 & \\ 0 & M & & \\ 0 & & M & \\ & & & 0 \end{array}\right)$, \hspace{.1 in} $\left(\begin{array}{cccc} & 0 & 0 & \\ 0 & M & M & \\ 0 & M & & \\ & & & 0 \end{array}\right)$.
  \end{center}
  In either case the submatrix $A_{11}$ satisfies the joint requirement for joints $1$ and $3$.
  
  If $M = 2N$ then, again given $A_{11}$ is symmetrically tropically singular, it must have, up to a diagonal permutation, one of the three forms:  
  \begin{center}
    $\left(\begin{array}{cccc} & 0 & 0 & \\ 0 & 2N & & \\ 0 & & & N \\ & & N & 0 \end{array}\right)$, \hspace{.1 in} $\left(\begin{array}{cccc} & 0 & 0 & \\ 0 & 2N & & N \\ 0 & & & \\ & N & & 0 \end{array}\right)$, \hspace{.1 in} $\left(\begin{array}{cccc} & 0 & 0 & \\ 0 & & 2N & N \\ 0 & 2N & & \\ & N & & 0 \end{array}\right)$.
  \end{center}
  In either case the submatrix $A_{11}$ again satisfies the joint requirement for joints $1$ and $3$.
  
  Finally, if $M > 2N$ then as $A_{11}$ is symmetrically tropically singular it must have the form:    
  \begin{center}
    $\left(\begin{array}{cccc} & 0 & 0 & \\ 0 & & & N \\ 0 & & & N \\ & N & N & 0 \end{array}\right)$.
  \end{center}
  In this case, again, the submatrix $A_{11}$ satisfies the joint requirement for joints $1$ and $3$.
  
  In each of these six possibilities $A_{11}$ satisfies the joint requirement for joints $1$ and $3$. An identical analysis can be performed on the submatrix $A_{33}$, and from this we can get that $A$ has joints $1$ and $3$. So, any matrix with off-diagonal form has joints.
\end{proof}

\begin{lemma}
  Suppose $A$ has the form:
  \begin{center}
    $\left(\begin{array}{ccccc} + & 0 & & & \\ 0 & + & & & \\ & & + & 0 & \\ & & 0 & 0 & \\ & & & & 0 \end{array}\right)$.
  \end{center}
  Then $A$ has joints.
\end{lemma}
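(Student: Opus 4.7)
The plan is to follow the approach used for Lemma 4: exploit the symmetric tropical singularity of $A_{55}$ together with the singularity of the other $4 \times 4$ submatrices of $A$ to force additional blank entries to vanish, until either the joint conditions can be verified directly or $A$ reduces, after a diagonal permutation, to off-diagonal form and the conclusion follows from Lemma 4.

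The starting point is $A_{55}$. Its tropical determinant equals $0$, realized by the permutation $(12)(34)$, and because $A_{1,1}, A_{2,2}, A_{3,3} > 0$, any second minimizing monomial must come from a permutation whose four entries are all $0$. Enumerating such permutations on $\{1,2,3,4\}$, and using Lemma 1 to absorb every $4$-cycle into a product of transpositions, one finds that, up to the diagonal permutations $1 \leftrightarrow 2$ and $3 \leftrightarrow 4$, the symmetric singularity of $A_{55}$ forces one of two essentially distinct configurations: either (A) $A_{1,3} = A_{2,3} = 0$, yielding the second minimizing monomial $X_{1,2} X_{1,3} X_{2,3} X_{4,4}$ from the $3$-cycle $(123)$, or (B) $A_{1,4} = A_{2,3} = 0$, yielding $X_{1,4}^{2} X_{2,3}^{2}$ from $(14)(23)$.

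In case (B), a direct verification shows that the diagonal permutation $2 \leftrightarrow 3$ carries $A$ to a matrix in off-diagonal form, with all four entries of the $\{1,2\} \times \{3,4\}$ off-diagonal block equal to $0$, the $(5,5)$ entry equal to $0$, and the remaining blanks on the diagonal blocks free to be any non-negative value; Lemma 4 then concludes. In case (A), the principal submatrix on the indices $\{1,2,3\}$ has $+$ on the diagonal and $0$ off the diagonal, and the two minimizing monomials $X_{1,2}^{2} X_{3,4}^{2}$ and $X_{1,2} X_{1,3} X_{2,3} X_{4,4}$ of $A_{55}$ already differ in the variables involving index $4$, which verifies condition 2 of the joint definition for the pair $(4,5)$. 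For the remaining two conditions I would use the singularity of $A_{44}$ to force one of $A_{1,5}, A_{2,5}, A_{3,5}$ to vanish, producing a second minimizing monomial of $A_{44}$ differing from $X_{1,2} X_{1,3} X_{2,3} X_{5,5}$ in the variables involving index $5$, and then invoke the singularity of $A_{54}$ either to force $A_{4,5} = 0$ (in which case $A_{54}$ acquires two minimizing monomials differing in their use of $X_{4,5}$ as required) or to force enough additional zeros in the last row and column that $A$ reduces to off-diagonal form and Lemma 4 applies.

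The main technical obstacle is the branching case analysis in case (A): the singularity constraints on $A_{44}$, $A_{54}$ and the other non-principal $4 \times 4$ submatrices propagate differently depending on which blank in the last row or column vanishes first. However, because the three strict positivities $A_{1,1}, A_{2,2}, A_{3,3} > 0$ eliminate nearly every permutation as a candidate for a $0$-valued tropical determinant, each branch collapses after at most one or two further rounds of singularity reasoning to a sub-form in which either the required minimizing monomials of each relevant submatrix can be exhibited directly, or off-diagonal form is reached up to a diagonal permutation.
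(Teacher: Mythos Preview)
Your overall strategy is sound and the case split on $A_{55}$ is correct, but note one slip: the diagonal permutation $3\leftrightarrow 4$ does \emph{not} preserve the hypothesis of this lemma, since $A_{3,3}=+$ while $A_{4,4}=0$. Fortunately $1\leftrightarrow 2$ alone already reduces the possibilities for a second minimizing monomial of $A_{55}$ to your cases (A) and (B), so the enumeration survives.

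Your case (B) matches the paper exactly: both recognise that $A_{1,4}=A_{2,3}=0$ puts $A$ in off-diagonal form after swapping two indices, and Lemma~4 finishes. In case (A), however, you and the paper diverge. The paper first records that, once case (B) is excluded, $A_{1,4},A_{2,4}>0$, and then examines $A_{33}$ rather than $A_{44}$: its singularity forces either $A_{1,5}=A_{2,5}=0$ or $A_{4,5}=0$, and in each branch (the second after one further look at $A_{44}$) it exhibits joints $(1,2)$ directly. Your plan instead targets joints $(4,5)$: the singularity of $A_{44}$ forces one of $A_{1,5},A_{2,5},A_{3,5}$ to vanish (verifying the $A_{44}$ joint condition), and then the singularity of $A_{54}$ either yields $A_{4,5}=0$ (verifying the $A_{54}$ joint condition) or produces an additional zero among $A_{1,4},A_{2,4},A_{1,5},A_{2,5},A_{3,5}$ that places $A$ in off-diagonal form. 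Both routes work. The paper's has the tidiness of always naming the joints explicitly without an off-diagonal fallback; yours has the conceptual advantage that the two minimizing monomials already found in $A_{55}$ immediately witness one of the three joint conditions, so less bookkeeping is needed before the branching begins.
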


\begin{proof}
  The submatrix $A_{55}$ must be symmetrically tropically singular, and this means either there is a diagonal permutation that will put $A$ in off-diagonal form, in which case we are done, or $A_{55}$ has the form:
  \begin{center}
    $\left(\begin{array}{cccc} + & 0 & 0 & + \\ 0 & + & 0 & + \\ 0 & 0 & + & 0 \\ + & + & 0 & 0 \end{array}\right)$.
  \end{center}
  In this case $A_{33}$ must have the form:
  \begin{center}
    $\left(\begin{array}{cccc} + & 0 & + & \\ 0 & + & + & \\ + & + & 0 & \\ & & & 0 \end{array}\right)$.
  \end{center}
  As $A_{33}$ must be symmetrically tropically singular, it must have one of the following two forms:
  \begin{center}
    $\left(\begin{array}{cccc} + & 0 & + & 0 \\ 0 & + & + & 0 \\ + & + & 0 & \\ 0 & 0 & & 0 \end{array}\right)$, \hspace{.1 in} or \hspace{.1 in} $\left(\begin{array}{cccc} + & 0 & + & \\ 0 & + & + & \\ + & + & 0 & 0 \\ & & 0 & 0 \end{array}\right)$.
  \end{center}    
  In the first possibility $A$ has the form:
  \begin{center}
    $\left(\begin{array}{ccccc} + & 0 & 0 & + & 0 \\ 0 & + & 0 & + & 0 \\ 0 & 0 & + & 0 & \\ + & + & 0 & 0 & \\ 0 & 0 & & & 0 \end{array}\right)$.
  \end{center}
  This form has joints $1$ and $2$. In the second possibility $A$ has the form:
  \begin{center}
    $\left(\begin{array}{ccccc} + & 0 & 0 & + &  \\ 0 & + & 0 & + &  \\ 0 & 0 & + & 0 & \\ + & + & 0 & 0 & 0 \\ & & & 0 & 0 \end{array}\right)$.
  \end{center}
  The submatrix $A_{44}$ has the form:
  \begin{center}
    $\left(\begin{array}{cccc} + & 0 & 0 & \\ 0 & + & 0 & \\ 0 & 0 & + & \\ & & & 0 \end{array}\right)$.
  \end{center}
  This submatrix must be symmetrically tropically singular and therefore, up to a diagonal permutation, must have the form:
  \begin{center}
    $\left(\begin{array}{cccc} + & 0 & 0 & 0 \\ 0 & + & 0 & \\ 0 & 0 & + & \\ 0 & & & 0 \end{array}\right)$.    
  \end{center}
  The corresponding form for $A$ is:
  \begin{center}
    $\left(\begin{array}{ccccc} + & 0 & 0 & + & 0 \\ 0 & + & 0 & + &  \\ 0 & 0 & + & 0 & \\ + & + & 0 & 0 & 0 \\ 0 & & & 0 & 0 \end{array}\right)$.
  \end{center}   
  Any matrix of this form has joints $1$ and $2$.
\end{proof}

\begin{lemma}
  Suppose $A$ has the form:
  \begin{center}
    $\left(\begin{array}{ccccc} + & 0 & & & \\ 0 & + & & & \\ & & 0 & 0 & \\ & & 0 & 0 & \\ & & & & 0 \end{array}\right)$.
  \end{center}
  Then $A$ has joints.
\end{lemma}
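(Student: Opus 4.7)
The plan is to show that $A$ always has joints by exploiting the forced symmetric tropical singularity of its $4 \times 4$ principal submatrices and case-splitting on where the resulting extra zeros must appear.

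First, observe that the principal submatrix $A_{55}$ has the form
\[
\left(\begin{array}{cccc} + & 0 & & \\ 0 & + & & \\ & & 0 & 0 \\ & & 0 & 0 \end{array}\right),
\]
so the permutations $(12)$ and $(12)(34)$ both realize the tropical determinant $0$ via the distinct monomials $X_{1,2}^{2}X_{3,3}X_{4,4}$ and $X_{1,2}^{2}X_{3,4}^{2}$. These differ in the variable involving index $3$ (and likewise in the variable involving index $4$), so $A_{55}$ satisfies the joint requirement for each of the pairs $(5,3)$ and $(5,4)$. This already discharges one of the three conditions needed to conclude that either $3,5$ or $4,5$ are joints of $A$.

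Next I would analyze the principal submatrix $A_{33}$ (indexed by $\{1,2,4,5\}$), whose tropical determinant is realized by $(12)$ with monomial $X_{1,2}^{2}X_{4,4}X_{5,5}$. Because $A$ has symmetric tropical rank at most three, a second realizing permutation must exist, and since every entry of $A$ is nonnegative this second permutation must be supported on $0$-entries only. The only presently blank entries of $A_{33}$ that could equal $0$ are $A_{1,4}, A_{2,4}, A_{1,5}, A_{2,5}, A_{4,5}$, and the candidate second permutations are $(12)(45)$ (needing $A_{4,5}=0$), $(14)(25)$ (needing $A_{1,4}=A_{2,5}=0$), and $(15)(24)$ (needing $A_{1,5}=A_{2,4}=0$). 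In every subcase the resulting second monomial differs from $X_{1,2}^{2}X_{4,4}X_{5,5}$ in the variable involving index $5$, so $A_{33}$ satisfies the joint requirement for joints $3$ and $5$. A parallel analysis handles $A_{44}$ for joints $4$ and $5$. One then selects the appropriate off-diagonal submatrix $A_{53}$ (or $A_{54}$) and verifies, using exactly the same forced zero entries, that it is symmetrically tropically singular with one minimizing monomial containing $X_{3,5}$ (respectively $X_{4,5}$) and one not.

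The main obstacle is the bookkeeping: one must check that in every subcase the same collection of forced zero entries simultaneously witnesses the joint requirement for $A_{ii}$, for $A_{jj}$, and for the off-diagonal $A_{ji}$. The argument is analogous in spirit to the exhaustive case analysis carried out in Lemma 4, and the same crude comparisons of monomial degrees used there suffice to resolve each possibility; the strict positivity of $A_{1,1}$ and $A_{2,2}$ rules out most alternative minimizing permutations and keeps the case list short.
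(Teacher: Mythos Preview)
Your enumeration of the candidate second minimizing permutations of $A_{33}$ is incomplete, and the omission breaks the argument. Besides the three products of transpositions you list, the $3$-cycles $(124)$ and $(125)$ also avoid the positive diagonal entries $A_{1,1},A_{2,2}$ and can realize the tropical determinant of $A_{33}$: for instance $(124)$ contributes the monomial $X_{1,2}X_{1,4}X_{2,4}X_{5,5}$, which has value $0$ precisely when $A_{1,4}=A_{2,4}=0$. In that subcase (with the remaining blanks of $A_{33}$ strictly positive) the only two minimizing monomials of $A_{33}$ are $X_{1,2}^{2}X_{4,4}X_{5,5}$ and $X_{1,2}X_{1,4}X_{2,4}X_{5,5}$, and both involve index $5$ only through $X_{5,5}$. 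Hence $A_{33}$ does \emph{not} satisfy the joint requirement for the pair $3,5$, contrary to your claim that ``in every subcase the resulting second monomial differs \ldots\ in the variable involving index $5$''.

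This is not merely a bookkeeping slip: it shows that the strategy of always producing joints containing the index $5$ cannot succeed as stated. The paper's proof proceeds quite differently. It first splits on the pattern of zeros in the lower-right $3\times3$ block (whether $A_{3,5},A_{4,5}$ are positive, one is zero, or both are zero), then forces further zeros from the singularity of $A_{33}$ and $A_{44}$, and in the resulting subcases exhibits joints that vary with the case: sometimes $3$ and $4$, sometimes $2$ and $3$, sometimes $4$ and $5$, and several subcases reduce to the off-diagonal form handled in Lemma~4. To repair your approach you would at minimum have to incorporate the $3$-cycle cases and allow the joint pair to change with the subcase, at which point the analysis becomes essentially the one in the paper.
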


\begin{proof}
  Suppose $A$ has the form
  \begin{center}
    $\left(\begin{array}{ccccc} + & 0 & & & \\ 0 & + & & & \\ & & 0 & 0 & + \\ & & 0 & 0 & + \\ & & + & + & 0 \end{array}\right)$.
  \end{center}
  The submatrices $A_{33}$ and $A_{44}$ will have the form:
  \begin{center}
    $\left(\begin{array}{cccc} + & 0 & & \\ 0 & + & & \\ & & 0 & + \\ & & + & 0 \end{array}\right)$.
  \end{center}
  For these submatrices to be symmetrically tropically singular they must have, up to a diagonal permutation, one of the two forms:
  \begin{center}
    $\left(\begin{array}{cccc} + & 0 & 0 & \\ 0 & + & 0 & \\ 0 & 0 & 0 & + \\ & & + & 0 \end{array}\right)$, \hspace{.1 in} or \hspace{.1 in} $\left(\begin{array}{cccc} + & 0 & 0 & \\ 0 & + & & 0 \\ 0 & & 0 & + \\ & 0 & + & 0 \end{array}\right)$.
  \end{center}
  Based on these forms, the matrix $A$, possibly after a diagonal permutation, must either have off-diagonal form, in which case we are done, or have one of the following two forms:
  \begin{center}
    $\left(\begin{array}{ccccc} + & 0 & 0 & 0 & \\ 0 & + & & & 0 \\ 0 & & 0 & 0 & \\ 0 & & 0 & 0 & \\ & 0 & & & 0  \end{array}\right)$, \hspace{.1 in} or \hspace{.1 in} $\left(\begin{array}{ccccc} + & 0 & & & 0 \\ 0 & + & & & 0 \\ & & 0 & 0 & \\ & & 0 & 0 & \\ 0 & 0 & & & 0 \end{array}\right)$.
  \end{center}
  The first possibility has joints $3$ and $4$.
    
  Suppose $A$ has the form of our second possibility above. Denote by $M$ the minimal off-diagonal term in $A$ that is not necessarily $0$. If $M$ is in the $2 \times 2$ submatrix formed by rows $1$ and $2$, and columns $3$ and $4$ then, after possibly a diagonal permutation, $A$ will have the form:
  \begin{center}
    $\left(\begin{array}{ccccc} + & 0 & & & 0 \\ 0 & + & M & & 0 \\ & M & 0 & 0 & \\ & & 0 & 0 & \\ 0 & 0 & & & 0 \end{array}\right)$.
  \end{center}
  Given the submatrix $A_{32}$ must be symmetrically tropically singular, $A$ must have one of the following five forms:    
  \begin{center}
    $\left(\begin{array}{ccccc} + & 0 & M & & 0 \\ 0 & + & M & & 0 \\ M & M & 0 & 0 & \\ & & 0 & 0 & \\ 0 & 0 & & & 0 \end{array}\right)$, \hspace{.1 in} $\left(\begin{array}{ccccc} + & 0 & & M & 0 \\ 0 & + & M & & 0 \\ & M & 0 & 0 & \\ M & & 0 & 0 & \\ 0 & 0 & & & 0 \end{array}\right)$, 
    
    $\left(\begin{array}{ccccc} + & 0 & & & 0 \\ 0 & + & M & M & 0 \\ & M & 0 & 0 & \\ & M & 0 & 0 & \\ 0 & 0 & & & 0 \end{array}\right)$, \hspace{.1 in} $\left(\begin{array}{ccccc} + & 0 & & & 0 \\ 0 & + & M & & 0 \\ & M & 0 & 0 & M \\ & & 0 & 0 & \\ 0 & 0 & M & & 0 \end{array}\right)$, 
    
    $\left(\begin{array}{ccccc} + & 0 & & & 0 \\ 0 & + & M & & 0 \\ & M & 0 & 0 & \\ & & 0 & 0 & M \\ 0 & 0 & & M & 0 \end{array}\right)$. 
  \end{center}
  All these possibilities have joints $2$ and $3$.
  
  If $M$ is not in that $2 \times 2$ submatrix, then, possibly after a diagonal permutation, we may assume $a_{4,5} = M$. As $A_{45}$ must be symmetrically tropically singular we get that $A$ must have the form:
  \begin{center}
    $\left(\begin{array}{ccccc} + & 0 & & & 0 \\ 0 & + & & & 0 \\ & & 0 & 0 & M \\ & & 0 & 0 & M \\ 0 & 0 & M & M & 0 \end{array}\right)$.
  \end{center}
  This form has joints $4$ and $5$.
  
  If $A$ has the form:
  \begin{center}
    $\left(\begin{array}{ccccc} + & 0 & & & \\ 0 & + & & & \\ & & 0 & 0 & + \\ & & 0 & 0 & 0 \\ & & + & 0 & 0 \end{array}\right)$,
  \end{center}
  then, given the submatrix $A_{44}$ must be symmetrically tropically singular, the possible forms of $A$, up to diagonal permutation, that are distinct from ones we have already examined are:
  \begin{center}
    $\left(\begin{array}{ccccc} + & 0 & 0 & & \\ 0 & + & & & 0 \\ 0 & & 0 & 0 & + \\ & & 0 & 0 & 0 \\ & 0 & + & 0 & 0 \end{array}\right)$, \hspace{.1 in } or \hspace{.1 in} $\left(\begin{array}{ccccc} + & 0 & 0 & & \\ 0 & + & 0 & & \\ 0 & 0 & 0 & 0 & + \\ & & 0 & 0 & 0 \\ & & + & 0 & 0 \end{array}\right)$.
  \end{center}
  The first possibility has joints $3$ and $4$. In the second possibility we note that the submatrix  $A_{51}$ is    
  \begin{center}
    $\left(\begin{array}{cccc} 0 & 0 & & \\ + & 0 & & \\ 0 & 0 & 0 & + \\ & 0 & 0 & 0 \end{array}\right)$.
  \end{center}  
  This matrix must be symmetrically tropically singular, and therefore, up to diagonal permutation, the matrix $A$ must have one of the forms: 
  \begin{center}
    $\left(\begin{array}{ccccc} + & 0 & 0 & & \\ 0 & + & 0 & 0 & \\ 0 & 0 & 0 & 0 & + \\ & 0 & 0 & 0 & 0 \\ & & + & 0 & 0 \end{array}\right)$, \hspace{.1 in} or \hspace{.1 in} $\left(\begin{array}{ccccc} + & 0 & 0 & & \\ 0 & + & 0 & & 0 \\ 0 & 0 & 0 & 0 & + \\ & & 0 & 0 & 0 \\ & 0 & + & 0 & 0 \end{array}\right)$.
  \end{center}
  Both have joints $3$ and $4$.
  
  Finally, suppose $A$ has the form  
  \begin{center}
    $\left(\begin{array}{ccccc} + & 0 & & & \\ 0 & + & & & \\ & & 0 & 0 & 0 \\ & & 0 & 0 & 0 \\ & & 0 & 0 & 0 \end{array}\right)$.
  \end{center}
  This matrix has the form of the first example matrix we examined, and so has joints $4$ and $5$. This exhausts all the possible forms of $A$, given the requirements of the lemma, and we have demonstrated that all these possibilities have joints.
\end{proof}

\begin{lemma}
  Suppose $A$ has a permutation realizing the tropical determinant whose disjoint cycle decomposition is the product of two transpositions, and after a diagonal permutation it can be arranged so this permutation realizing the tropical determinant has cycle-decomposition $(12)(34)$, and the upper-left $2 \times 2$ submatrix of $A$ has the form:
  \begin{center}
    $\left(\begin{array}{cc} + & 0 \\ 0 & + \end{array}\right)$.
  \end{center}
  Then $A$ has joints.
\end{lemma}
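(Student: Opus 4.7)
The plan is to reduce Lemma~7 to the three preceding lemmas via a short case analysis on the only diagonal entries that are not already pinned down. First I apply Proposition~1 to the permutation $\sigma = (12)(34)$: after a symmetric scaling and possibly a diagonal permutation, we may assume every entry of $A$ is nonnegative and that $a_{1,2} = a_{3,4} = a_{5,5} = 0$. The hypothesis that the upper-left $2 \times 2$ block has the form $\left(\begin{array}{cc} + & 0 \\ 0 & + \end{array}\right)$ forces $a_{1,1}, a_{2,2} > 0$. The only diagonal entries whose values remain undetermined are $a_{3,3}$ and $a_{4,4}$, and each must be nonnegative, hence is either zero or strictly positive.

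Next I split into four subcases based on the pair $(a_{3,3}, a_{4,4})$. If $a_{3,3}, a_{4,4} > 0$, then $A$ is exactly of the form handled in Lemma~4, so $A$ has joints. If $a_{3,3} > 0$ and $a_{4,4} = 0$, then $A$ matches the form of Lemma~5 and again has joints. If instead $a_{3,3} = 0$ and $a_{4,4} > 0$, a diagonal permutation swapping indices $3$ and $4$ interchanges these two diagonal entries while preserving nonnegativity, the block form of the upper-left $2 \times 2$ submatrix, and the cycle type of the permutation realizing the tropical determinant; after this permutation $A$ falls under Lemma~5. Finally, if $a_{3,3} = a_{4,4} = 0$, then $A$ has the form of Lemma~6, which directly yields joints.

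The main thing to verify carefully is that Proposition~1, applied to $\sigma = (12)(34)$, really does leave $a_{3,3}$ and $a_{4,4}$ as the only free diagonal parameters, and that the symmetric scalings used in the proof of that proposition are compatible with the strict positivity of $a_{1,1}$ and $a_{2,2}$ that the hypothesis imposes on the upper-left block. Neither point is truly hard: the scaling of rows/columns $1,2$ in Proposition~1 for the $\sigma = (12)(34)$ case only normalizes the off-diagonal entry $a_{1,2}$ and then fixes the sign of the diagonal, so any initially positive $a_{1,1}, a_{2,2}$ can be kept positive. Once that reduction to the four subcases is justified, the proof is simply a matter of invoking Lemmas~4, 5, and 6 together with one diagonal symmetry; I do not expect any serious technical obstacle beyond this bookkeeping.
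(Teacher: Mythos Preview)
Your proposal is correct and matches the paper's own proof, which simply states that all possible forms satisfying the hypotheses are handled by Lemmas~4, 5, and~6. Your explicit case split on whether $a_{3,3}$ and $a_{4,4}$ are zero or positive is exactly the content behind that one-line invocation, and the diagonal swap of indices $3$ and $4$ to reduce the $(0,+)$ case to Lemma~5 is the intended symmetry.
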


\begin{proof}
  All the possible forms of $A$ that satisfy these requirements are handled by Lemmas 4, 5, and 6. Therefore, $A$ has joints.
\end{proof}

\begin{lemma}
  Suppose $A$ has a permutation realizing the tropical determinant whose disjoint cycle decomposition is the product of two transpositions, it is possible to find a diagonal permutation such that the permutation realizing the tropical determinant is $(12)(34)$, and the upper-left $2 \times 2$ submatrix is:
  \begin{center}
    $\left(\begin{array}{cc} + & 0 \\ 0 & 0 \end{array}\right)$,
  \end{center} 
  while it is impossible to find a diagonal permutation such that the permutation realizing the tropical determinant is $(12)(34)$, and the upper-left $2 \times 2$ submatrix is:  
  \begin{center}
    $\left(\begin{array}{cc} + & 0 \\ 0 & + \end{array}\right)$.
  \end{center}
  Then $A$ has joints.
\end{lemma}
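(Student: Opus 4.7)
The plan is to enumerate all matrix forms consistent with the hypotheses and, in each, either exhibit joints directly or reduce to a form already handled by Lemmas 4, 5, or 6.

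First, I would translate the hypotheses into structure on $A$. Proposition 1 together with the minimizer $(12)(34)$ gives $a_{1,2}=a_{2,1}=a_{3,4}=a_{4,3}=a_{5,5}=0$ and all entries nonnegative; the assumed upper-left form pins $a_{1,1}>0$ and $a_{2,2}=0$. The key observation is that the diagonal permutation swapping the pair $\{1,2\}$ with the pair $\{3,4\}$ (for instance $1\leftrightarrow 3$, $2\leftrightarrow 4$) preserves the cycle structure $(12)(34)$, so if both $a_{3,3}$ and $a_{4,4}$ were positive, this swap would produce the forbidden upper-left form $\left(\begin{array}{cc}+ & 0 \\ 0 & +\end{array}\right)$. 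Hence at least one of $a_{3,3},a_{4,4}$ must vanish; after possibly swapping indices $3$ and $4$, I assume $a_{4,4}=0$, and split into subcases (I) $a_{3,3}>0$ and (II) $a_{3,3}=0$.

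In subcase (I), the matrix has the form
\[
\left(\begin{array}{ccccc} + & 0 & & & \\ 0 & 0 & & & \\ & & + & 0 & \\ & & 0 & 0 & \\ & & & & 0 \end{array}\right),
\]
with the two zero-diagonal indices in opposite blocks being $2$ and $4$. The natural candidates for joints are $\{2,4\}$, and the verification closely parallels Lemma 6: examine the submatrices $A_{22}$, $A_{44}$, and $A_{42}$, each of which must be symmetrically tropically singular, and enumerate the ways additional zeros are forced to appear in the blank positions. Using the symmetry group of diagonal permutations preserving $(12)(34)$ --- swaps within $\{1,2\}$, within $\{3,4\}$, and the block swap $\{1,2\}\leftrightarrow\{3,4\}$ --- to identify equivalent configurations, I would for each resulting refined form directly check that $A_{22}$ and $A_{44}$ satisfy the joint requirement for $\{2,4\}$ and that $A_{42}$ has two minimizing monomials, one containing $X_{2,4}$ and the other not.

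In subcase (II), the only positive diagonal entry is $a_{1,1}$, and the permutation $(12)$ alone already realizes the tropical determinant. The plan is to rerun the enumeration of subcase (I), now with a richer pool of minimizing permutations and with candidate joints chosen from the enlarged set $\{2,3,4,5\}$ of zero-diagonal indices; because more permutations realize the determinant, the joint requirements are strictly easier to satisfy. Alternatively one may attempt to reduce this subcase to Lemma 5 or Lemma 6 by a suitable diagonal permutation, but such a reduction is not always available, so direct enumeration is the safer route. The hard part will be the combinatorial bookkeeping: each singular $4\times 4$ submatrix is an ``or''-type constraint on where blank entries vanish, and these constraints interact. The symmetry group preserving $(12)(34)$ collapses many configurations into equivalence classes, but individually verifying the joint requirement in every distinct refined form, as in the preceding lemmas, is the tedious but unavoidable core of the argument.
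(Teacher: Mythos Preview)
Your plan follows the same overall architecture as the paper's proof: derive that at least one of $a_{3,3},a_{4,4}$ vanishes, then split on whether the other is positive. The chief difference is in your subcase (I). The paper does not enumerate here at all; it observes that with the form
\[
\left(\begin{array}{ccccc} + & 0 & & & \\ 0 & 0 & & & \\ & & + & 0 & \\ & & 0 & 0 & \\ & & & & 0 \end{array}\right),
\]
the symmetric tropical singularity of $A_{55}$ (in conjunction with the hypothesis excluding the $\left(\begin{smallmatrix}+&0\\0&+\end{smallmatrix}\right)$ configuration) forces $A$ into the \emph{off-diagonal form} already analyzed inside the proof of Lemma~4. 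This dispatches subcase (I) in a single line, whereas your parallel-to-Lemma-6 enumeration with candidate joints $\{2,4\}$ would presumably succeed but is considerably more labor.

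In subcase (II) the two arguments are closer, but the paper gives the enumeration a concrete scaffold by stratifying on the entries in column $5$: it treats separately the cases $a_{1,5}=0$; then $a_{1,5}>0$ with $a_{3,5},a_{4,5}>0$; then $a_{1,5}>0$ with exactly one of $a_{3,5},a_{4,5}$ zero (which reduces by a diagonal permutation to a form already handled); and finally $a_{1,5}>0$ with $a_{3,5}=a_{4,5}=0$. This column-$5$ organization is exactly the bookkeeping device your sketch lacks. One further point: in several of the nine sub-forms that arise when $a_{1,5}=0$, the paper's chosen joints are $\{1,3\}$, so your heuristic of restricting candidate joints to the zero-diagonal indices $\{2,3,4,5\}$ is too narrow and would need to be relaxed.
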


\begin{proof}
  If $A$ has the form:  
  \begin{center}
    $\left(\begin{array}{ccccc} + & 0 & & & \\ 0 & 0 & & & \\ & & + & 0 & \\ & & 0 & 0 & \\ & & & & 0 \end{array}\right)$,
  \end{center}
  then as $A_{55}$ must be symmetrically tropically singular the only possibility is that $A$ has off-diagonal form. 
  
  Suppose $A$ has the form:
  \begin{center}
      $\left(\begin{array}{ccccc} + & 0 & & & 0 \\ 0 & 0 & & & \\ & & 0 & 0 & \\ & & 0 & 0 & \\ 0 & & & & 0\end{array}\right)$.
  \end{center}  
  Let $M$ denote the minimal element that is not necessarily $0$ and is not $a_{2,5}$ or its symmetric counterpart $a_{5,2}$. If $a_{i,j} = M$ then, given the submatrix $A_{ij}$ must be symmetrically tropically singular, we can derive that, up to a diagonal permutation, $A$ must have one of the following nine forms: 
  \begin{center}
    $\left(\begin{array}{ccccc} + & 0 & M & & 0 \\ 0 & 0 & M & & \\ M & M & 0 & 0 & \\ & & 0 & 0 & \\ 0 & & & & 0 \end{array}\right)$, \hspace{.1 in} $\left(\begin{array}{ccccc} + & 0 & & M & 0 \\ 0 & 0 & M & & \\  & M & 0 & 0 & \\ M & & 0 & 0 & \\ 0 & & & & 0 \end{array}\right)$,
    
    $\left(\begin{array}{ccccc} + & 0 & & & 0 \\ 0 & 0 & M & M & \\ & M & 0 & 0 & \\ & M & 0 & 0 & \\ 0 & & & & 0 \end{array}\right)$, \hspace{.1 in} $\left(\begin{array}{ccccc} + & 0 & & & 0 \\ 0 & 0 & M & & \\ & M & 0 & 0 & M \\ & & 0 & 0 & \\ 0 & & M & & 0 \end{array}\right)$,
    
    $\left(\begin{array}{ccccc} + & 0 & & & 0 \\ 0 & 0 & M & & \\ & M & 0 & 0 & \\ & & 0 & 0 & M \\ 0 & & & M & 0 \end{array}\right)$, \hspace{.1 in} $\left(\begin{array}{ccccc} + & 0 & M & M & 0 \\ 0 & 0 & & & \\ M & & 0 & 0 & \\ M & & 0 & 0 & \\ 0 & & & & 0 \end{array}\right)$,
    
    $\left(\begin{array}{ccccc} + & 0 & M & & 0 \\ 0 & 0 & & & \\ M & & 0 & 0 & M \\ & & 0 & 0 & \\ 0 & & M & & 0 \end{array}\right)$, \hspace{.1 in} $\left(\begin{array}{ccccc} + & 0 & M & & 0 \\ 0 & 0 & & & \\ M & & 0 & 0 & \\ & & 0 & 0 & M \\ 0 & & & M & 0 \end{array}\right)$,
        
    $\left(\begin{array}{ccccc} + & 0 & & & 0 \\ 0 & 0 & & & \\ & & 0 & 0 & M \\ & & 0 & 0 & M \\ 0 & & M &M & 0 \end{array}\right)$.  
  \end{center}
  The first five possibilities have joints $2$ and $3$, possibilities six through eight have joints $1$ and $3$, and the ninth possibility has joints $4$ and $5$. 
    
  Suppose $A$ has the form:  
  \begin{center}
    $\left(\begin{array}{ccccc} + & 0 & & & + \\ 0 & 0 & & & \\ & & 0 & 0 & + \\ & & 0 & 0 & + \\ + & & + & + & 0\end{array}\right)$.
  \end{center}
  Given the submatrices $A_{33}$ and $A_{44}$ must be symmetrically tropically singular, the matrix $A$ must have the form:
  \begin{center}
    $\left(\begin{array}{ccccc} + & 0 & 0 & 0 & + \\ 0 & 0 & & & \\ 0 & & 0 & 0 & + \\ 0 & & 0 & 0 & + \\ + & & + & + & 0 \end{array}\right)$.
  \end{center}
  This matrix has joints $3$ and $4$. 
  
  If $A$ has the form:  
  \begin{center}
    $\left(\begin{array}{ccccc} + & 0 & & & + \\ 0 & 0 & & & \\ & & 0 & 0 & + \\ & & 0 & 0 & 0 \\ + & & + & 0 & 0\end{array}\right)$,
  \end{center}
  then, given $A_{44}$ must be symmetrically tropically singular, $A$ must have the form:
  \begin{center}
    $\left(\begin{array}{ccccc} + & 0 & 0 & & + \\ 0 & 0 & & & \\ 0 & & 0 & 0 & + \\ & & 0 & 0 & 0 \\ + & & + & 0 & 0\end{array}\right)$,
  \end{center}
  which, after a diagonal permutation, is a form analyzed earlier in the lemma.
    
  Finally, if $A$ has the form:  
  \begin{center}
    $\left(\begin{array}{ccccc} + & 0 & & & + \\ 0 & 0 & & & \\ & & 0 & 0 & 0 \\ & & 0 & 0 & 0 \\ + & & 0 & 0 & 0\end{array}\right)$,
  \end{center}
  then it is of the form of the example matrix we first analyzed in this section, and $A$ has joints $4$ and $5$. This exhausts all the possibilities, and the lemma is proven.
\end{proof}

Up to diagonal permutation the only form we have yet to consider is:
\begin{center}
  $\left(\begin{array}{ccccc} 0 & 0 & & & \\ 0 & 0 & & & \\ & & 0 & 0 & \\ & & 0 & 0 & \\ & & & & 0 \end{array}\right)$.
\end{center}
Denote the minimal term in the $2 \times 2$ submatrix formed by rows $1, 2$ and columns $3,4$ as $M$, the minimal term in the $2 \times 1$ submatrix formed by rows $1,2$ and column $5$ as $N$, and the minimal term in the $2 \times 1$ submatrix formed by rows $3,4$ and column $5$ as $P$.
  
If either $N = 0$ or $P = 0$ we can, after a diagonal permutation, assume $A$ has the form:

\begin{center}    
  $\left(\begin{array}{ccccc} 0 & 0 & & & \\ 0 & 0 & & & \\ & & 0 & 0 & \\ & & 0 & 0 & 0 \\ & & & 0 & 0 \end{array}\right)$.
\end{center}

\begin{lemma}
  If $A$ has the form:    
  \begin{center}
    $\left(\begin{array}{ccccc} 0 & 0 & & & \\ 0 & 0 & & & \\ & & 0 & 0 & \\ & & 0 & 0 & 0 \\ & & & 0 & 0 \end{array}\right)$,
  \end{center}
  then $A$ has joints.
\end{lemma}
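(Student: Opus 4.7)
The plan is to parametrize the blank entries of $A$ and case-analyze, in the style of Lemmas $4$ through $8$. Write
\[
A = \left(\begin{array}{ccccc} 0 & 0 & a & b & p \\ 0 & 0 & c & d & q \\ a & c & 0 & 0 & r \\ b & d & 0 & 0 & 0 \\ p & q & r & 0 & 0 \end{array}\right),
\]
with $a, b, c, d, p, q, r \geq 0$, recalling that every $4 \times 4$ submatrix of $A$ is symmetrically tropically singular. The natural first candidate for joints is the pair $(4, 5)$, since the extra zero at $(4, 5)$ is exactly what distinguishes this form from the all-zero-block form appearing in the paragraph preceding the lemma. One checks directly that $A_{55}$ always satisfies its joint requirement, via the minimizers $id$ and $(34)$, whose index-$4$ variables $X_{4,4}$ and $X_{3,4}^{2}$ differ.

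For the remaining conditions, a short enumeration over permutations of $\{1,2,3,5\}$ shows that $A_{44}$ admits a zero minimizer whose index-$5$ variable is not $X_{5,5}$ exactly when one of $p$, $q$, or $r$ vanishes (via the transpositions $(15)$, $(25)$, or $(35)$). Likewise $A_{54}$ admits a minimizer avoiding $X_{4,5}$ exactly when $r = 0$, or when both $\min(p,q) = 0$ and $\min(a,c) = 0$. Putting these together, the pair $(4, 5)$ is a joint whenever $r = 0$, and also whenever $r > 0$ together with $\min(p,q) = \min(a,c) = 0$.

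The main obstacle is the residual region, where $r > 0$ and either $\min(p,q) > 0$ or $\min(a,c) > 0$. The key tool there is the symmetric singularity of the non-principal submatrices, especially $A_{15}$, $A_{51}$, $A_{25}$, $A_{52}$, $A_{13}$, $A_{31}$ and their transposes: these sharply restrict how the blank entries can be configured. For instance, the tropical determinant of $A_{51}$ is $\min(a,b,c,d,p,q)$, and when this minimum is achieved uniquely by one of $a$, $b$, $c$, or $d$ the corresponding bijection produces a monomial with no partner under the symmetric equivalence, so singularity forces the minimum to be attained by $p$ or $q$ or for equal values to be tied. Cataloguing the forced ties and vanishings narrows the remaining configurations to a short list, and in each I would exhibit joints of the form $(i, 5)$ for some $i \in \{1, 2, 3\}$, using the extra zero monomials that the forced structure creates. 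The difficulty is bookkeeping rather than any single subargument, and the $1 \leftrightarrow 2$ symmetry of the form keeps the case count manageable.
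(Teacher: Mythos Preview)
Your opening analysis is fine: $A_{55}$ always meets its joint requirement, and your characterization of when $A_{44}$ and $A_{54}$ do is essentially correct (you omit a few extra cases arising from $b=0$ or $d=0$, but that only shrinks the residual region, so it is harmless). The problem is the residual case, which you only sketch. Your plan there is to use the singularity of $A_{51}$, $A_{15}$, etc.\ to force enough structure that joints of the form $(i,5)$ with $i\in\{1,2,3\}$ appear. That expectation is wrong.

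Take
\[
A=\begin{pmatrix} 0&0&2&1&2\\ 0&0&1&2&2\\ 2&1&0&0&3\\ 1&2&0&0&0\\ 2&2&3&0&0 \end{pmatrix}.
\]
This matrix has the form in the lemma, and a direct check shows every $4\times4$ submatrix is symmetrically tropically singular. Here $r=3>0$ and $p=q=2>0$, so by your own criterion $A_{44}$ fails its joint requirement and $(4,5)$ is out. For $(3,5)$, the submatrix $A_{53}$ has tropical determinant $0$ but every minimizer avoids $X_{3,5}$ since $a_{3,5}=3>0$. For $(1,5)$ and $(2,5)$, the submatrices $A_{51}$ and $A_{52}$ each have tropical determinant $1$, while any bijection using $X_{1,5}$ or $X_{2,5}$ contributes at least $2$; so again no minimizer contains the needed diagonal-crossing variable. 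Thus no pair $(i,5)$ is a joint of $A$. The matrix does have joints, namely $(2,3)$: both $A_{22}$ and $A_{33}$ have the obvious pairs of zero minimizers, and $A_{32}$ has tropical determinant $1$ realized both by $X_{1,1}X_{2,3}X_{4,4}X_{5,5}$ (containing $X_{2,3}$) and by $X_{1,2}X_{1,4}X_{3,4}X_{5,5}$ (not containing it).

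The paper's argument avoids this trap by organizing the case split differently. It branches first on whether $a_{3,5}=0$; if so, the bottom-right $3\times3$ block is all zeros and the example analyzed before the lemma gives joints $(4,5)$. If $a_{3,5}>0$, it lets $M$ be the minimum of the remaining blank entries and uses the symmetric singularity of $A_{42}$ to pin down six possible forms; in five of them the joints are $(2,3)$, and in the sixth they are $(1,5)$. So the key submatrix is $A_{42}$, not $A_{51}$, and the target joint pair is usually $(2,3)$, neither index being $5$.
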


\begin{proof}
  If $A$ has the form:    
  \begin{center}
    $\left(\begin{array}{ccccc} 0 & 0 & & & \\ 0 & 0 & & & \\ & & 0 & 0 & 0 \\ & & 0 & 0 & 0 \\ & & 0 & 0 & 0 \end{array}\right)$,
  \end{center}  
  then $A$ has the form of the first example analyzed in this chapter, and therefore has joints $4$ and $5$. 
  
  Suppose $A$ has the form:  
  \begin{center}
    $\left(\begin{array}{ccccc} 0 & 0 & & & \\ 0 & 0 & & & \\ & & 0 & 0 & + \\ & & 0 & 0 & 0 \\ & & + & 0 & 0 \end{array}\right)$,
  \end{center}
  and denote by $M$ the minimal term in $A$ that is not necessarily $0$ and is not the term $a_{3,5}$ or its symmetric counterpart $a_{5,3}$. Then, given the submatrix $A_{42}$ has the form:
  \begin{center}
    $\left(\begin{array}{cccc} 0 & & & \\ 0 & & & \\ & 0 & 0 & + \\ & + & 0 & 0 \end{array}\right)$,
  \end{center}
  and must be symmetrically tropically singular, $A$ must have, up to diagonal permutation, one of the following six forms:
  \begin{center}
    $\left(\begin{array}{ccccc} 0 & 0 & & M & \\ 0 & 0 & M & & \\ & M & 0 & 0 & + \\ M & & 0 & 0 & 0 \\ & & + & 0 & 0 \end{array}\right)$, \hspace{.1 in} $\left(\begin{array}{ccccc} 0 & 0 & M & & \\ 0 & 0 & M & & \\ M & M & 0 & 0 & + \\ & & 0 & 0 & 0 \\ & & + & 0 & 0 \end{array}\right)$,
      
    $\left(\begin{array}{ccccc} 0 & 0 & & & \\ 0 & 0 & M & M & \\ & M & 0 & 0 & + \\ & M & 0 & 0 & 0 \\ & & + & 0 & 0 \end{array}\right)$, \hspace{.1 in} $\left(\begin{array}{ccccc} 0 & 0 & & & M \\ 0 & 0 & M & & \\ & M & 0 & 0 & + \\ & & 0 & 0 & 0 \\ M & & + & 0 & 0 \end{array}\right)$,
        
    $\left(\begin{array}{ccccc} 0 & 0 & & & \\ 0 & 0 & M & & M \\ & M & 0 & 0 & + \\ & & 0 & 0 & 0 \\ & M & + & 0 & 0 \end{array}\right)$, \hspace{.1 in} $\left(\begin{array}{ccccc} 0 & 0 & & & M \\ 0 & 0 & & & M \\ & & 0 & 0 & + \\ & & 0 & 0 & 0 \\ M & M & + & 0 & 0 \end{array}\right)$.   
  \end{center}
  The first five possibilities have joints $2$ and $3$. The final possibility has joints $1$ and $5$.
\end{proof}
    
\begin{lemma}
  Suppose $A$ has the form 
  \begin{center}
    $\left(\begin{array}{ccccc} 0 & 0 & & & \\ 0 & 0 & & & \\ & & 0 & 0 & \\ & & 0 & 0 & \\ & & & & 0 \end{array}\right)$.
  \end{center}
  Denote the minimal term in the $2 \times 2$ submatrix formed by rows $1, 2$ and columns $3,4$ as $M$, the minimal term in the $2 \times 1$ submatrix formed by rows $1,2$ and column $5$ as $N$, and the minimal term in the $2 \times 1$ submatrix formed by rows $3,4$ and column $5$ as $P$. If $M \leq N \otimes P$ then $A$ has joints.
\end{lemma}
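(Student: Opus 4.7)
The plan is to show that, after a diagonal permutation preserving the form of $A$ (one in the subgroup generated by $(12)$, $(34)$, and $(13)(24)$), the matrix $A$ has joints $2$ and $3$. Since $M$ is attained somewhere in the $2\times 2$ corner block on rows $\{1,2\}$ and columns $\{3,4\}$, applying $(12)$ and/or $(34)$ I may assume $a_{2,3} = a_{3,2} = M$. If any of $a_{1,5}, a_{2,5}, a_{3,5}, a_{4,5}$ vanishes then a further diagonal permutation places a zero at position $(4,5)$ and Lemma 9 applies, so I may also assume $N, P > 0$.

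For the joint requirement on the principal submatrix $A_{22}$, both the identity and the transposition $(34)$ realize the tropical determinant of $A_{22}$ (all their factors are zero entries of $A_{22}$), producing minimizing monomials $X_{1,1} X_{3,3} X_{4,4} X_{5,5}$ and $X_{1,1} X_{3,4}^{2} X_{5,5}$ that differ in the variable involving index $3$. The analogous observation on $A_{33}$, with $(12)$ in place of $(34)$, produces minimizing monomials $X_{1,1} X_{2,2} X_{4,4} X_{5,5}$ and $X_{1,2}^{2} X_{4,4} X_{5,5}$ differing in the variable involving index $2$.

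The crux is the submatrix $A_{32}$, with row indices $\{1,2,4,5\}$ and column indices $\{1,3,4,5\}$. The bijection $\rho(1)=1, \rho(2)=3, \rho(4)=4, \rho(5)=5$ gives the monomial $X_{1,1} X_{2,3} X_{4,4} X_{5,5}$ of value $M$, so the tropical determinant of $A_{32}$ is at most $M$. For the lower bound I case-split on $\rho(2) \in \{1,3,4,5\}$, using that every corner entry is at least $M$, every fifth-column entry on rows $1,2$ is at least $N$, and every fifth-column entry on rows $3,4$ is at least $P$; in each branch the hypothesis $M \leq N + P$ is exactly what forces the sum to be at least $M$. Hence the tropical determinant of $A_{32}$ equals $M$.

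Since $A$ has symmetric tropical rank at most three, $A_{32}$ must be symmetrically tropically singular and so admits a second minimizing monomial of value $M$. Examining the branch $\rho(2)=3$ carefully, using $N, P > 0$, shows that any minimizing monomial with $\rho(2)=3$ must have $\rho(5)=5$ and $\{\rho(1),\rho(4)\} \subseteq \{1,4\}$, which leaves no alternative to $X_{1,1} X_{2,3} X_{4,4} X_{5,5}$ unless $a_{1,4} = M$ (itself a case where a second corner entry attains $M$). Thus the second minimizing monomial either has $\rho(2) \in \{1,4\}$ (arising from a second corner entry attaining $M$) or has $\rho(1) = 5$ or $\rho(2) = 5$ (arising from $M = N+P$ together with matching equalities $a_{i,5} = N$ and $a_{j,5} = P$ for some $i \in \{1,2\}$, $j \in \{3,4\}$); in either situation it omits the variable $X_{2,3}$. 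Combined with the joint requirements on $A_{22}$ and $A_{33}$, this gives joints $2$ and $3$. The main obstacle is this final case analysis for $A_{32}$, where the interplay between $M \leq N + P$ and the reduction to $N, P > 0$ is essential to rule out spurious minimizing monomials coming through the last row and column.
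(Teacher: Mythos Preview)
Your proof is correct and follows essentially the same approach as the paper: place $M$ at position $(2,3)$ via a diagonal permutation, then verify that $2$ and $3$ are joints by examining $A_{22}$, $A_{33}$, and $A_{32}$. The paper's argument is considerably terser---it simply asserts that $A_{32}$ ``must have two distinct permutations realizing the tropical determinant, one involving the $M$ term and the other not'' and concludes, whereas you explicitly reduce to $N,P>0$ via Lemma~9, verify the joint requirements on $A_{22}$ and $A_{33}$, compute the tropical determinant of $A_{32}$, and carry out the case analysis showing a minimizing monomial avoiding $X_{2,3}$ must exist; these are exactly the details the paper suppresses.
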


\begin{proof}
  After possibly a diagonal permutation we may assume $A$ has the form:  
  \begin{center}
    $\left(\begin{array}{ccccc} 0 & 0 & & & \\ 0 & 0 & M & & \\ & M & 0 & 0 & \\ & & 0 & 0 & \\ & & & & 0 \end{array}\right)$.
  \end{center}
  The submatrix $A_{32}$ has the form
  \begin{center}
    $\left(\begin{array}{cccc} 0 & & & \\ 0 & M & & \\ & 0 & 0 & \\ & & & 0 \end{array}\right)$
  \end{center}
  and must be symmetrically tropically singular. Therefore, $A_{32}$ must have two distinct permutations realizing the tropical determinant, one involving the $M$ term and the other not. Therefore $A$ has joints $2$ and $3$.
  \end{proof}
    
  Finally, if $A$ has the form
  \begin{center}
    $\left(\begin{array}{ccccc} 0 & 0 & & & \\ 0 & 0 & & & \\ & & 0 & 0 & \\ & & 0 & 0 & \\ & & & & 0 \end{array}\right)$,
  \end{center}
  with $M$,$N$, and $P$ defined as in the lemma above, if $N,P > 0$ and $N \otimes P < M$, then possibly after a diagonal permutation we may assume $A$ has the form
  \begin{center}
    $\left(\begin{array}{ccccc} 0 & 0 & & & N \\ 0 & 0 & & & N' \\ & & 0 & 0 & P \\ & & 0 & 0 & P' \\ N & N' & P & P' & 0 \end{array}\right)$,
  \end{center}
  with $N \leq N'$ and $P \leq P'$. If we examine the submatrix $A_{31}$ we get
  \begin{center}
    $\left(\begin{array}{cccc} 0 & & & N \\ 0 & & & N' \\ & 0 & 0 & P' \\ N' & P & P' & 0 \end{array}\right)$.
  \end{center}
  In order for this sumbatrix to be symmetrically tropically singular, we must have either $N = N'$ or $P = P'$. Possibly after a diagonal permutation, we may assume $P = P'$. This is the exceptional form examined earlier in this section.

  We've now exhausted the possible forms of $A$, and our results can be summarized in the following proposition.

\begin{proposition}
  If $A$ is a $5 \times 5$ symmetric matrix with symmetric tropical rank three, and if $A$ does not have exceptional form, then $A$ has joints.
\end{proposition}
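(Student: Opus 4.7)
The plan is simply to assemble the lemmas and structural arguments already built up in this subsection. The standing hypothesis of the subsection is that every $4 \times 4$ submatrix of $A$ is symmetrically tropically singular, so in particular $A$ itself is, and Proposition~1 lets me symmetrically scale and diagonally permute $A$ into a nonnegative matrix for which some permutation $\sigma$ realizes the tropical determinant with $a_{i,\sigma(i)} = 0$ for all $i$. By Lemma~2, I may further assume the cycle decomposition of $\sigma$ contains a transposition, and after a diagonal permutation that transposition is $(12)$.

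I would then divide into two overarching cases. Case (a): no permutation realizing the tropical determinant decomposes as a product of transpositions (with fixed points allowed). Then Lemma~3 directly produces joints. Case (b): some minimizing permutation is a product of transpositions; after a diagonal permutation I may take it to be $(12)(34)$, since the pure $(12)$ subcase simply contributes additional diagonal zeros that fall under the analyses below. Inside Case (b) I would subdivide on the upper-left $2 \times 2$ diagonal pattern of $A$: Lemma~7 handles the case where both $a_{1,1}$ and $a_{2,2}$ are positive, Lemma~8 handles the case where exactly one of them is zero and no diagonal permutation transports the matrix into the hypothesis of Lemma~7, and Lemma~6 together with the diagonal-pattern reductions in its proof covers the subcase $a_{1,1}=a_{2,2}=0$ while $a_{3,3}$ or $a_{4,4}$ remains positive.

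This leaves the subcase where all four of $a_{1,1},a_{2,2},a_{3,3},a_{4,4}$ are zero. Introducing $M$, $N$, $P$ as the minima over the blocks $\{1,2\}\times\{3,4\}$, $\{1,2\}\times\{5\}$, and $\{3,4\}\times\{5\}$ respectively, Lemma~9 covers the configurations where $N=0$ or $P=0$, Lemma~10 covers $M \le N \odot P$, and the only remaining possibility is $N,P>0$ with $M > N \odot P$. The discussion immediately preceding the proposition already analyzes this final configuration and shows that symmetric tropical singularity of the submatrix $A_{31}$ forces $A$ into the exceptional form, which is excluded by hypothesis. Hence $A$ must have joints in every case.

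The main obstacle is purely organizational: verifying that the case enumeration is exhaustive. Concretely, I must check that after the normalization of Proposition~1 and the diagonal-permutation freedom, every $5 \times 5$ symmetric matrix of symmetric tropical rank at most three lands in exactly one of the forms addressed by Lemmas~3 and~7--10, or in the exceptional form, and that the reductions ``after possibly a diagonal permutation'' invoked along the way really do cover every admissible cycle structure of minimizing permutations. No new mathematical content is required beyond this bookkeeping.
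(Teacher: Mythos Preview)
Your proposal is correct and matches the paper's approach: the paper's proof of this proposition is a single line citing Lemmas~3, 7, 8, 9, and 10, which is exactly the case decomposition you describe. One minor bookkeeping point: the subcase $a_{1,1}=a_{2,2}=0$ with $a_{3,3}$ or $a_{4,4}$ positive is, after swapping the $\{1,2\}$ and $\{3,4\}$ blocks, already absorbed into Lemmas~7 and~8 rather than Lemma~6 alone (Lemma~6 is one ingredient of Lemma~7), so the cleanest citation list is Lemmas~3, 7, 8, 9, 10.
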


\begin{proof}
  All the possible forms for $A$, up to diagonal permutation, are proven to have joints by Lemmas 3, 7, 8, 9, and 10.
\end{proof}

We now have all we need to prove the major theorem of this paper.

\begin{thm}
  The $4 \times 4$ minors of a $5 \times 5$ symmetric matrix form a tropical basis.
\end{thm}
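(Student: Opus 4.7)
The plan is to observe that the theorem is essentially a reorganization of the machinery already built up in the paper, and to reduce it to the three preceding propositions together with the known fact (Theorem 3) that the $2\times2$ and $3\times3$ minors of a symmetric matrix of indeterminates form a tropical basis. Concretely, we must show that if $A$ is a $5\times5$ symmetric real matrix lying in the tropical prevariety cut out by the $4\times4$ minors of the symmetric matrix of indeterminates, then $A$ lies in the corresponding tropical variety, i.e.\ $A$ admits a symmetric lift of rank at most $3$. Using the equivalent characterization stated in the subsection on symmetric Kapranov and tropical rank, the hypothesis on $A$ is precisely that the symmetric tropical rank of $A$ is at most $3$, and the conclusion is precisely that the symmetric Kapranov rank of $A$ is at most $3$.

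First I would split on the symmetric tropical rank of $A$. If it is at most $2$, I would invoke Theorem 3 for $r=2$ and $r=3$: the $3\times 3$ minors of a symmetric matrix are a tropical basis, so symmetric tropical rank $\leq 2$ forces symmetric Kapranov rank $\leq 2 \leq 3$, and we are done. If the symmetric tropical rank is exactly $3$, I would apply Proposition 1 to replace $A$, without loss of generality, by a symmetrically scaled and diagonally permuted matrix; by the bullet points preceding Proposition 1 this operation preserves both symmetric tropical and symmetric Kapranov rank, so nothing is lost.

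Next I would appeal to Proposition 4, which provides the exhaustive form analysis: either the rescaled $A$ has joints, or it has the exceptional form introduced in Proposition 3. In the first alternative, Proposition 2 directly gives symmetric Kapranov rank at most $3$. In the second alternative, Proposition 3 explicitly constructs a symmetric rank three lift. Either way, symmetric Kapranov rank $\leq 3$, as required.

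The only nonroutine ingredient is confirming that Proposition 4 really covers every matrix of symmetric tropical rank exactly $3$ that might arise, which is guaranteed by the Lemmas 3, 7, 8, 9, and 10 already assembled. The main obstacle is purely bookkeeping: one must verify that the hypothesis ``all $4\times4$ minors tropically singular'' translates cleanly into ``symmetric tropical rank $\leq 3$'' in the sense used by Proposition 4, and that the exceptional form genuinely satisfies the tropical prevariety hypothesis (which it does by inspection, since every $4\times4$ submatrix is easily checked to be symmetrically tropically singular). With these observations the theorem follows by the case analysis above.
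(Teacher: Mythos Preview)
Your proposal is correct and follows essentially the same route as the paper, which simply records the theorem as ``an immediate consequence of Propositions 2, 3, and 4.'' You are more explicit than the paper in handling the low-rank case (symmetric tropical rank $\leq 2$) via Theorem~3 and in spelling out the normalization via Proposition~1, but these are exactly the implicit steps underlying the paper's one-line proof.
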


\begin{proof}
  This is an immediate consequence of Propositions 2, 3, and 4.
\end{proof}

\section{Larger symmetric matrices}

In \cite{dss}, Develin, Santos, and Sturmfels proved the cocircuit matrix of the Fano matroid,
\begin{center}
  $\left(\begin{array}{ccccccc} 1 & 1 & 0 & 1 & 0 & 0 & 0 \\ 0 & 1 & 1 & 0 & 1 & 0 & 0 \\ 0 & 0 & 1 & 1 & 0 & 1 & 0 \\ 0 & 0 & 0 & 1 & 1 & 0 & 1 \\ 1 & 0 & 0 & 0 & 1 & 1 & 0 \\ 0 & 1 & 0 & 0 & 0 & 1 & 1 \\ 1 & 0 & 1 & 0 & 0 & 0 & 1 \end{array}\right)$,
\end{center}
has tropical rank three but Kapranov rank four. If we permute the rows of this matrix with the permutation given by the disjoint cycle decomposition $(27)(36)(45)$ we get the symmetric matrix
\begin{center}  
  $\left(\begin{array}{ccccccc} 1 & 1 & 0 & 1 & 0 & 0 & 0 \\ 1 & 0 & 1 & 0 & 0 & 0 & 1 \\ 0 & 1 & 0 & 0 & 0 & 1 & 1 \\ 1 & 0 & 0 & 0 & 1 & 1 & 0 \\ 0 & 0 & 0 & 1 & 1 & 0 & 1 \\ 0 & 0 & 1 & 1 & 0 & 1 & 0 \\ 0 & 1 & 1 & 0 & 1 & 0 & 0 \end{array}\right)$.
\end{center}
While this symmetric matrix has standard tropical rank three, its symmetric tropical rank is four, and it's therefore \emph{not} an example of a matrix with symmetric tropical rank three but greater symmetric Kapranov rank.

This matrix can, however, be used to construct the following symmetric matrix with symmetric tropical rank three, but greater symmetric Kapranov rank:
\begin{center}
  $\left(\begin{array}{ccccccccccccc} 0 & 0 & 0 & 0 & 0 & 0 & 1 & 1 & 0 & 1 & 0 & 0 & 0 \\ 0 & 0 & 0 & 0 & 0 & 0 & 1 & 0 & 1 & 0 & 0 & 0 & 1 \\ 0 & 0 & 0 & 0 & 0 & 0 & 0 & 1 & 0 & 0 & 0 & 1 & 1 \\ 0 & 0 & 0 & 0 & 0 & 0 & 1 & 0 & 0 & 0 & 1 & 1 & 0 \\ 0 & 0 & 0 & 0 & 0 & 0 & 0 & 0 & 0 & 1 & 1 & 0 & 1 \\ 0 & 0 & 0 & 0 & 0 & 0 & 0 & 0 & 1 & 1 & 0 & 1 & 0 \\ 1 & 1 & 0 & 1 & 0 & 0 & 0 & 1 & 1 & 0 & 1 & 0 & 0 \\ 1 & 0 & 1 & 0 & 0 & 0 & 1 & 0 & 0 & 0 & 0 & 0 & 0 \\ 0 & 1 & 0 &0 & 0 & 1 & 1 & 0 & 0 & 0 & 0 & 0 & 0 \\ 1 & 0 & 0 & 0 & 1 & 1 & 0 & 0 & 0 & 0 & 0 & 0 & 0 \\ 0 & 0 & 0 & 1 & 1 & 0 & 1 & 0 & 0 & 0 & 0 & 0 & 0 \\ 0 & 0 & 1 & 1 & 0 & 1 & 0 & 0 & 0 & 0 & 0 & 0 & 0 \\ 0 & 1 & 1 & 0 & 1 & 0 & 0 & 0 & 0 & 0 & 0 & 0 & 0 \end{array}\right)$
\end{center}
The upper-right, and bottom-left, $7 \times 7$ submatrices of this $13 \times 13$ symmetric matrix are the symmetric version of the cocircuit matrix of the Fano matroid. This $13 \times 13$ matrix has symmetric tropical rank three. If it had symmetric Kapranov rank three then its upper-right $7 \times 7$ submatrix would have standard Kapranov rank three, and this is impossible.

In \cite{z}, the author proved that if the $r \times r$ minors of an $n \times n$ symmetric matrix of indeterminates are not a tropical basis, then the $r \times r$ minors of an $(n+1) \times (n+1)$ symmetric matrix of indeterminates are also not a tropical basis. From this, we see the $4 \times 4$ minors of an $n \times n$ symmetric matrix of indeterminates are not a tropical basis in $n \geq 13$. The cases $6 \leq n \leq 12$, on the other hand, remain open.

\newtheorem{question}{Question}
\begin{question}
  Are the $4 \times 4$ minors of an $n \times n$ symmetric matrix of indeterminates a tropical basis for $6 \leq n \leq 12$?
\end{question}

While a modified version of the approach used in this paper could work for larger symmetric matrices, the number of cases that would need to be checked is significant, and checking them all would likely be arduous. In Section 3 of \cite{cjr} Chan, Jensen, and Rubei compute the set of $5 \times 5$ matrices of tropical rank at most 3, and of Kapranov rank at most 3, using the software Gfan. They then compare the sets and show they are equal. The most straightforward way to answer Question 1 might be to use Gfan or some other computational software package.

\end{document}